\newcommand{\margnote}[1]{
\ifthenelse{\boolean{shownotes}}%
{\marginpar{\raggedright\tiny\texttt{#1}}}%
{}%
}
\newcommand{\hole}[1]{
\ifthenelse{\boolean{shownotes}}%
{\begin{center} \fbox{ \rule {.25cm}{0cm}
\rule[-.1cm]{0cm}{.4cm} \parbox{.85\textwidth}{\begin{center}
\texttt{#1}\end{center}} \rule {.25cm}{0cm}}\end{center}}
{}
}
\title{Relative entropy in diffusive relaxation}
\author{Corrado Lattanzio\thanks{
Dipartimento di Matematica Pura ed Applicata
\newline
Universit\`a degli Studi dell'Aquila
\newline
Via Vetoio
\newline
I 67010 Coppito (L'Aquila) AQ 
\newline 
Italy
({\tt corrado@univaq.it}).}
\and
Athanasios E. Tzavaras\thanks{
Department of Applied Mathematics
\newline
University of Crete 
\newline
GR 71409 Heraklion, Crete 
\newline
Greece 
\newline
and
\newline
Institute for Applied and Computational Mathematics
\newline
Foundation for Research and Technology
\newline
GR 70013 Heraklion, Crete
\newline
Greece 
({\tt tzavaras@tem.uoc.gr}).}
}
\numberwithin{equation}{section}
\newtheorem{remark}[theorem]{Remark}
\newcommand{\R}{\mathbb R}
\newcommand{\T}{\mathbb T}
\newcommand{\e}{\varepsilon}
\newcommand{\eps}{\varepsilon}
\newcommand{\dx}{\mathop{{\rm div}_{x}}}
\newcommand{\px}{\partial_{x}}
\newcommand{\pt}{\partial_{t}}
\newcommand{\pxi}{\partial_{x_{i}}}
\def\charf {\mbox{{\text 1}\kern-.30em {\text l}}}
\def\del{\partial}
\begin{document}
\allowdisplaybreaks

\maketitle

\begin{abstract}
We establish convergence in the diffusive limit from entropy weak solutions of 
the equations of compressible gas dynamics with friction to the porous media equation away from vacuum.
The result is based on a Lyapunov type of functional provided by a calculation of the relative entropy.
The relative entropy method is also employed to establish convergence from entropic weak solutions
of viscoelasticity with memory  to the system of viscoelasticity of the rate-type. 
\end{abstract}

 
 \section{Introduction}

The relative entropy method of Dafermos and DiPerna \cite{Dafermos79a,Dafermos79,Diperna79} 
provides an efficient mathematical tool for studying stability and limiting processes
among thermomechanical theories. It is intimately connected to the second law of thermodynamics
and has been tested in various situations involving stability and asymptotic behavior of shocks
({\it e.g.} \cite{Diperna79,CFL02,LV11}),
relaxation or kinetic limits in the hydrodynamic regime \cite{Tza05,BV05},
stability and limiting processes among thermomechanical theories
\cite{Dafermos79a,Iesan94,LT06,DST12}.

The method hinges on a direct calculation of the relative entropy between a  dissipative 
solution and an entropy conservative (smooth) solution for the underlying thermomechanical process,
which provides a remarkable stability formula \cite{Dafermos79a,Dafermos79}. In more complicated situations involving
the comparison of two solutions with shocks it is supplemented with additional information,
{\it e.g.} \cite{Diperna79,CFL02,LV11}.
The objective of this article is to extend the relative entropy formula in situations 
where a dissipative solution of a thermomechanical system is directly compared to a dissipative solution of 
a limiting system. We use as test cases various paradigms of diffusive limits, the most significant perhaps
being the validation of the limit from the Euler equations with friction
to the porous media equation in the zero-relaxation limit.

We consider the system of isentropic gas dynamics with friction
\begin{equation}
\begin{aligned}
        \rho_{t} +\frac{1}{\e}\dx m &= 0
        \\
	m_{t} + \frac{1}{\e}\dx \frac{m\otimes m}{\rho} 
       + \frac{1}{\e}\nabla_{x}p(\rho) 
	&= -\frac{1}{\e^{2}}m
\end{aligned}
\label{intro:euler}
\end{equation}
with the so called diffusive scaling, which captures the effective long-time response.
In the limit $\eps \to 0$ this system approaches the porous media equation
\begin{equation}
    \rho_{t} - \triangle_{x}p(\rho) = 0.
    \label{intro:pme}
\end{equation}
This problem has served as a paradigm for the theory of diffusive relaxation \cite{MR00,LY01,DM04}
and has been justified either by asymptotic in time analysis \cite{HL92,Nis96,Liu97,HMP05,HPW11},
or via direct analysis of the relaxation limit, for weak solutions in  \cite{MMS88,MM90,MR00}
or for smooth solutions near equilibrium in  \cite{CG07,LC11}.

In this paper we compare directly a weak entropy solution of \eqref{intro:euler} 
to a smooth solution of \eqref{intro:pme} using a relative entropy analysis (Proposition  \ref{prop:relenEuler}).
This, in turn, provides a convergence result to solutions of the porous media equation that stay 
away from vacuum (Theorems \ref{th:finalEuler3d} and \ref{th:finalEuler1d}). 
The novelty of the present work is the simplicity of the proof following a Lyapunov type of analysis;
in addition some new situations are analyzed (for instance, solutions approaching different end-states at $\pm \infty$),
plus a rate of convergence is obtained.
Finally, in the spirit of \cite{BDS11,DST12}, the relative entropy inequality is extended 
between entropy measure-valued solutions
of the Euler equation  and the porous media in Section \ref{subseq:relenmv}.

We then test some other cases of diffusive relaxation using the relative entropy method.
In Section \ref{sec:psystem}, we consider
the $p$-system with damping in Lagrangian coordinates and establish convergence to 
a parabolic equation, in the high-friction limit (Theorem \ref{th:finalpsys}). 
In Section \ref{sec:viscoelasticity}, we consider the limiting process  from viscoelasticity 
of the memory type \eqref{eq:mainVE} to the system of viscoelasticity of the rate-type \eqref{eq:equilVE}
in the diffusive regime. We provide a relative entropy estimation between the two theories and a convergence result 
(see Proposition \ref{prop:relenVE} and Theorem \ref{th:finalVE}) thereby extending for quasilinear systems previous convergence results in the semilinear case from \cite{DiFL04,DL09}.

It is remarkable that in all those examples the dissipation of the approximating
system can be split in two separate parts: the dissipation of the limit diffusion equation,
and a second part that captures the dissipation of the approximating system 
relative to its diffusive-scale limit.


\section{Isentropic gas dynamics in Eulerian coordinates with damping}
\label{sec:euler}
We consider the system of isentropic gas dynamics in three 
space dimensions with a damping term:
\begin{equation}
    \begin{cases}
        \displaystyle{\rho_{t} +\frac{1}{\e}\dx m =0}&   \\
	& \\
       \displaystyle{ m_{t} + \frac{1}{\e}\dx \frac{m\otimes m}{\rho} 
       + \frac{1}{\e}\nabla_{x}p(\rho) 
	= -\frac{1}{\e^{2}}m}, & 
    \end{cases}
    \label{eq:euler}
\end{equation}
where $t\in\R$, $x\in\R^{3}$, the density $\rho\geq 0$ and the momentum flux $m\in\R^{3}$. 
The pressure $p(\rho)$ satisfies $p'(\rho) > 0$ which makes the system hyperbolic.
An important particular case is that of the $\gamma$--law: $p(\rho) = k\rho^\gamma$ with $\gamma \geq 1$ and $k>0$.
In \eqref{eq:euler},  the variables $(x,t)$ are already scaled in the so called  diffusive scaling. 
In the diffusive relaxation limit $\e \to 0$, solutions of \eqref{eq:euler} formally converge to the 
porous media equation
\begin{equation}
    \bar \rho_{t} - \triangle_{x}p(\bar \rho) = 0.
    \label{eq:pme}
\end{equation}
The goal of this work is to study this limit via the relative entropy method.

We recall that $(\eta , q_1 , q_2 , q_3 ) (\rho, m) : \R^+ \times \R^3 \to \R \times \R^3$ is an entropy--entropy flux pair
for the hyperbolic system \eqref{eq:euler} if it satisfies the differential relations:
\begin{equation}
\begin{aligned}
\frac{\del q_j}{\del m_i} &= 
\Big ( \frac{\del \eta}{\del \rho} + \frac{1}{\rho} m_k \frac{\del \eta}{\del m_k} \Big ) \delta_{ij} + \frac{1}{\rho} m_j 
\frac{\del \eta}{\del m_i}
\\
\frac{\del q_j}{\del \rho} &= - \frac{1}{\rho^2} m_i m_j \frac{\del \eta}{\del m_i} + p'(\rho) \frac{\del \eta}{\del m_j }
\end{aligned}
\end{equation}
where $i,j = 1, 2, 3$,  $\delta_{ij}$ stands for the Kronecker symbol, and the summation convention is used.
Moreover, the entropy $\eta(\rho, m)$ is dissipative (for the underlying relaxation process) if
\begin{equation*}
\nabla_{(\rho,m)}\eta(\rho,m) \cdot (0, - m) = - \nabla_{m}\eta(\rho,m) \cdot m \leq 0.
\end{equation*}

An example of an entropy pair is provided by the mechanical energy 
\begin{equation}
\label{eq:mechen}
   \eta(\rho,m) =  \frac{1}{2}\frac{|m|^{2}}{\rho} + h(\rho),
\end{equation}
and the associated flux of mechanical work
\begin{equation}
\label{eq:mechflux}
    q(\rho,m) = \frac{1}{2}m\frac{|m|^{2}}{\rho^{2}} + 
    mh'(\rho) \, .
\end{equation}
Here, 
$h(\rho) = \rho e(\rho)$,  where $e(\rho)$ is the internal energy of the gas connected to the pressure via
$e' (\rho) = \frac{p(\rho)}{\rho^2}$. Accordingly, 
\begin{equation}
\label{eq:deffunh}
    h''(\rho) = \frac{p'(\rho)}{\rho}; \quad  \rho h'(\rho) = p(\rho) + h(\rho).
\end{equation}
For the particular case of $\gamma$--law gases, $h$ takes the form
\begin{equation*}
    h(\rho) = 
    \begin{cases}
    \displaystyle{\frac{k}{\gamma-1}\rho^{\gamma} = \frac{1}{\gamma-1}p(\rho)} & \hbox{for}\ \gamma>1; \\
      \displaystyle{k\rho\log\rho  } & \hbox{for}\ \gamma=1.
    \end{cases}
\end{equation*}
Smooth solutions of \eqref{eq:euler} satisfy the identity
\begin{equation}
\label{eq:strentrEuler}
    \eta(\rho,m)_{t} +\frac{1}{\e}\dx q(\rho, m) =  
    -\frac{1}{\e^{2}}
    \nabla_{m}\eta(\rho,m)\cdot m = 
    - \frac{1}{\e^{2}}\frac{|m|^{2}}{\rho}\leq0,
\end{equation}
The mechanical energy $\eta(\rho, m)$ is dissipative for the relaxation process \eqref{eq:euler}.

\subsection{Hilbert expansion}\label{subsec:hilbertEuler}
We start by reviewing the Hilbert expansion associated to the relaxation process from \eqref{eq:euler} to
\eqref{eq:pme}. We introduce the asymptotic expansions
 \begin{align*}
      \rho &= \rho_{0} + \e \rho_{1} + \e^{2}\rho_{2}+ \ldots \, , \\
      m  &= m_{0} + \e m_{1} + \e^{2}m_{2}+ \ldots  \, ,
\end{align*}
to the balance of mass and momentum equations in \eqref{eq:euler},
and collect together the terms of the same order, to obtain, respectively, 
\begin{align*}
&\mbox{$O(\e^{-1})$} \quad &\dx m_{0} = 0 \, ,
\\
&\mbox{$O(1)$}    &\pt \rho_{0} + \dx m_{1} = 0 \, ,
\\
&\mbox{$O(\e)$}   &\pt \rho_{1} + \dx m_{2} = 0 \, ,
\end{align*}
and
\begin{align*}
&\mbox{$O(\e^{-2})$} \quad &&\quad m_{0} = 0 \, , 
\\
&\mbox{$O(\e^{-1})$}    &&-m_1 = \nabla_{x} p(\rho_{0}) \, ,
\\
&\mbox{$O(1)$}   &&-m_2 =   \nabla_{x} (p'(\rho_{0})\rho_{1}) \, ,
\\
&\mbox{$O(\e)$}   &&-m_3 = \partial_t m_1 + \dx \Big (  \frac{m_1\otimes m_1}{\rho_0}\Big ) + \nabla_x 
\Big ( p'(\rho_0)\rho_2 +\frac{1}{2} p''(\rho_0)\rho_1^2 \Big) \, .
\end{align*}
In particular, we recover the equilibrium relation $m_{0} = 0$ for the state variables,
Darcy's law $m_{1} = -\nabla_{x}p(\rho_{0})$, and observe that $\rho_0$ satisfies \eqref{eq:pme}.

Next, we focus on the asymptotic expansion of the entropy equation \eqref{eq:strentrEuler},
and in particular on how the hyperbolic entropy (the mechanical energy) captures in the $\eps \to 0$ limit 
the entropy structure of the porous media equation.
Introducing the Hilbert expansion into \eqref{eq:strentrEuler} and using  $m_0=0$, 
we see that
\begin{align*}
     &  \partial_t \big( h(\rho_{0}) + \e h'(\rho_0)\rho_1  \big) +  
     \frac{1}{\e} \dx \big (\e m_{1}h'(\rho_{0})  + \e^2(m_2h'(\rho_0) + m_1h''(\rho_0)\rho_1) \big ) 
      \\
     &= - \frac{|m_1|^2}{\rho_0} + \e \left ( |m_1|^2 \frac{\rho_1}{\rho_0^2}  - 2\frac{m_1\cdot m_2}{\rho_0}\right) + O(\e^2).
\end{align*}
Again, collecting together terms of the same order gives
\begin{align*}
\mbox{$O(1)$} \quad  &h(\rho_{0})_t + \dx \big ( m_{1}h'(\rho_{0})  \big) = - \frac{|m_1|^2}{\rho_0} \, ,
\\
\mbox{$O(\e)$} \; \; \;
&\partial_t  \big(h'(\rho_0)\rho_1  \big)  + \dx \big (m_2h'(\rho_0) + m_1h''(\rho_0)\rho_1\big ) =
 |m_1|^2 \frac{\rho_1}{\rho_0^2}  - 2\frac{m_1\cdot m_2}{\rho_0} \, .
\end{align*}
Since $m_{1} = -\nabla_{x}p(\rho_{0})$, the leading order term $\rho_0$ in the
diffusive limit satisfies the energy identity
\begin{equation}\label{eq:entrPME}
    h(\rho)_{t} - \dx \big ( h'(\rho) \nabla_{x}p(\rho) \big) = - 
    \frac{| \nabla_{x} p(\rho)|^{2}}{\rho} \, .
\end{equation}

Equation \eqref{eq:entrPME} captures the entropy dissipation of the porous medium equation \eqref{eq:pme} 
and $h(\rho)$ is indeed the entropy selected by Otto \cite{Ott01} in his 
gradient flow interpretation of \eqref{eq:pme}.

\subsection{Relative entropy identity}\label{subsec:relenexEuler}
Let $(\rho^\eps, m^\eps)$  be a weak solution of \eqref{eq:euler} that satisfies the weak form of
the entropy inequality
\begin{equation}
    \eta(\rho,m)_{t} +\frac{1}{\e}\dx q(\rho, m)  + \frac{1}{\e^{2}}\frac{|m|^{2}}{\rho}\leq 0.
    \label{eq:entrEuler}
\end{equation}
(We drop the $\eps$-dependence of $(\rho^\eps, m^\eps)$ except where emphasis makes it necessary.)
Let $\bar \rho$ be a smooth solution of the porous media equation \eqref{eq:pme}; such a solution will
also satisfy the entropy identity \eqref{eq:entrPME}.
Our goal is to devise an identity  that  monitors the distance between
$\rho^\eps$ and $\bar \rho$.

Such identities have been obtained via the relative entropy method in the context of problems 
of hyperbolic relaxation \cite{LT06,Tza05,BV05}. The relative entropy is defined as the
quadratic part of the Taylor series expansion between two solutions $(\rho, m)$ and $(\bar \rho, \bar m)$;
it takes the form
\begin{align}
    \eta(\rho, m \left | \bar \rho, \bar m \right. ) 
    &:= \eta (\rho, m) - \eta (\bar \rho, \bar m) - 
    \eta_{\rho} (\bar\rho, \bar m) (\rho - \bar\rho)
    \nonumber
\\
    &\qquad \quad 
    - \nabla_{m}\eta (\bar\rho, \bar m) \cdot (m - \bar m) 
    \nonumber
\\
    &= \frac{1}{2}\rho \left |\frac{m}{\rho} - \frac{\bar m}{\bar \rho}  \right |^2 + h(\rho \left | \bar \rho \right.)\, ,
\label{eq:relendef}
\end{align}
while the corresponding relative entropy-flux reads
\begin{align}
    &q_{i}(\rho, m \left | \bar \rho, \bar m \right. )  := q_{i} (\rho, m) - q_{i}(\bar\rho, \bar m) - 
    \eta_{\rho} (\bar\rho, \bar m) (m_{i}- \bar m_{i}) 
    \nonumber\\
   &\qquad \qquad \qquad \qquad
    -\nabla_{m}\eta(\bar \rho, \bar m) \cdot (f_{i}(\rho,m) - f_{i}(\bar 
    \rho, \bar m)) 
    \nonumber\\
    &= \frac{1}{2}m_i \Big |\frac{m}{\rho} - \frac{\bar m}{\bar \rho}  \Big |^2 + \rho (h'(\rho) - 
    h'(\bar \rho))\Big ( \frac{m_i}{\rho} - \frac{\bar m_i}{\bar \rho} \Big ) + \frac{\bar m_i}{\bar \rho} h(\rho \left | \bar \rho \right.)\, ,
    \label{eq:fluxrelEuler}
\end{align}
where $i=1,2,3$, $f_i$ stands for the (vector) of the flux in \eqref{eq:euler},
\begin{equation}
    f_{i}( \rho,  m) =  m_{i}\frac{m}{\rho} + p(\rho) I_{i}
\label{eq:fluxEuler}
\end{equation}
and $I_{i}$ is the $i$--th column of the $3\times 3$ identity 
matrix. 

Now the question arises about how to select $\bar m$ in \eqref{eq:relendef}, \eqref{eq:fluxrelEuler}. 
This relates to a significant
difference among the hyperbolic relaxation and the diffusive relaxation frameworks:
in the existing studies of hyperbolic relaxation limits
one compares an \emph{energy dissipative} with an \emph{energy conservative} solution. The fact that the
limiting solution is energy conservative (and smooth) is an important restriction
 in the derivation of the relative entropy
identities available in the hyperbolic relaxation framework (see \cite{Tza05,BV05}).  By contrast, 
by the   nature of the diffusive relaxation framework, the solutions to be compared
have both to be energy dissipative.
To effect the comparison we select an $\eps$-dependent solution
$(\bar \rho, \bar m)$ that adapts itself in the relaxation process.

A suitable selection of $\bar m$ is proposed by rewriting \eqref{eq:pme} in the form,
\begin{equation}
\label{eq:pmeref}
\begin{aligned}
\bar \rho_{t} +\frac{1}{\e}\dx \bar m &= 0
\\
\bar m = - \eps \nabla_{x}p(\bar \rho) \, ,
\end{aligned}
\end{equation}
of the conservation of mass equation in \eqref{eq:euler} together with (a rescaled form of) Darcy's law.
The energy identity \eqref{eq:entrPME} may also be expressed in terms of $(\bar \rho, \bar m)$ as
\begin{equation*}
    h(\bar \rho)_{t} +\frac{1}{\eps} \dx \big ( \bar m h'(\bar \rho)  \big) = - 
    \frac{1}{\eps^2} \frac{| \bar m|^{2}}{\bar \rho} \, .
\end{equation*}
In turn, \eqref{eq:pmeref} is embedded into
the system of \emph{Euler equations  with relaxation}, plus additional terms purported
to be higher-order errors. A simple calculation shows that $(\bar \rho, \bar m)$ satisfies
\begin{equation}
    \begin{aligned}
	\bar \rho_{t} +\frac{1}{\e}\pxi \bar m_{i} &=0 
	\\
       \bar m_{t} + \frac{1}{\e} \pxi f_{i}(\bar \rho, \bar m)
	&= -\frac{1}{\e^{2}}\bar m + e(\bar \rho, \bar m) \, ,
    \end{aligned}
    \label{eq:eulerBAR}
\end{equation}
where (we use the convention of summation over repeated indices and) $\bar e$  is  given by 
\begin{align}
   \bar e :=  e(\bar \rho, \bar m) &= \frac{1}{\e} \dx \left (\frac{\bar 
       m\otimes \bar m}{ \bar\rho} \right ) - \e \pt \nabla_{x}p(\bar 
       \rho) \nonumber\\
       &= 
       \e  \dx \left (\frac{ 
       \nabla_{x}p(\bar \rho)\otimes \nabla_{x}p(\bar \rho)}{\bar \rho} \right ) 
       -\e \nabla_{x}(p'(\bar 
       \rho)\triangle_{x}p(\bar \rho)) \nonumber\\
       &= O(\e)
       \label{eq:errortermdef}
\end{align}
and is thus an error term.

\bigskip
The main result of this section is:

\begin{proposition}\label{prop:relenEuler}
Let $(\rho,m)$ be a weak entropy solution of \eqref{eq:euler} satisfying \eqref{eq:entrEuler}
and let  $(\bar \rho, \bar m)$ be a smooth solution of \eqref{eq:pmeref}. Then, 
\begin{equation}
 \eta(\rho, m \left | \bar \rho, \bar m \right. )_t + \frac{1}{\e}\dx q(\rho, m\left | \bar \rho, \bar m \right. ) \leq -\frac{1}{\e^2}
 R(\rho, m\left | \bar \rho, \bar m \right. ) - Q -E,
    \label{eq:relenEuler}
    \end{equation}
where
\begin{align}
&R(\rho, m\left | \bar \rho, \bar m \right. ) = \rho \left |\frac{m}{\rho} - \frac{\bar m}{\bar \rho}  \right |^2 \, ,
\nonumber \\
\label{eq:defrhs}
& Q =  
    \frac{1}{\e} \nabla^{2}_{(\rho,m)} \eta(\bar \rho, \bar m) 
    \begin{pmatrix} \bar \rho_{x_i}  \\  \bar m_{x_i} \end{pmatrix}  
    \cdot 
    \begin{pmatrix} 0  \\  f_i (\rho , m | \bar \rho , \bar m ) \end{pmatrix}  \, ,
    \\
& E =   e(\bar \rho, \bar m)\cdot \frac{\rho}{\bar\rho}\left ( \frac{m}{\rho} - \frac{\bar m}{\bar \rho} \right ) \, ,
\nonumber
\end{align}    
and $e(\bar \rho, \bar m)$ is defined in \eqref{eq:errortermdef}.
\end{proposition}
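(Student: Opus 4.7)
The plan is a direct chain-rule calculation. Assuming (formally) that $(\rho,m)$ is smooth, I expand
\begin{align*}
\partial_t\eta(\rho,m\,|\,\bar\rho,\bar m) &= \partial_t\eta(\rho,m) - \partial_t\eta(\bar\rho,\bar m) \\
&\quad -\partial_t[\eta_\rho(\bar\rho,\bar m)]\,(\rho-\bar\rho) - \eta_\rho(\bar\rho,\bar m)\,\partial_t(\rho-\bar\rho) \\
&\quad -\partial_t[\nabla_m\eta(\bar\rho,\bar m)]\cdot(m-\bar m) - \nabla_m\eta(\bar\rho,\bar m)\cdot\partial_t(m-\bar m).
\end{align*}
I then substitute: the entropy inequality \eqref{eq:entrEuler} into the first piece; the smooth entropy identity derived from \eqref{eq:eulerBAR} (which carries an additional $\nabla_m\eta(\bar\rho,\bar m)\cdot\bar e$) into the second; and the mass and momentum balance laws for $(\rho,m)$ and $(\bar\rho,\bar m)$ into the remaining pieces. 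The weak-solution version is recovered at the end by testing the weak entropy inequality and the weak momentum equation against smooth functions built from $(\bar\rho,\bar m)$, which legitimizes these manipulations.

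The outcome splits naturally into three families sorted by scale. The $1/\e^2$ \emph{friction} family collects five contributions: $-|m|^2/\rho$ from $\partial_t\eta(\rho,m)$; $+|\bar m|^2/\bar\rho$ from $-\partial_t\eta(\bar\rho,\bar m)$; $-|\bar m|^2(\rho-\bar\rho)/\bar\rho^2$ from $-\partial_t[\eta_\rho(\bar\rho,\bar m)](\rho-\bar\rho)$; and two copies of $\bar m\cdot(m-\bar m)/\bar\rho$ arising from $-\partial_t[\nabla_m\eta(\bar\rho,\bar m)]\cdot(m-\bar m)$ and $-\nabla_m\eta(\bar\rho,\bar m)\cdot\partial_t(m-\bar m)$. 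Using $\nabla_m\eta=m/\rho$, these five terms collapse by direct algebra into the perfect square $-\rho|m/\rho-\bar m/\bar\rho|^2=-R$. The $1/\e$ \emph{flux} family combines $\dx q(\rho,m)$ and $\dx q(\bar\rho,\bar m)$ with the flux contributions generated when the momentum equations are substituted into the $\eta_\rho$- and $\nabla_m\eta$-type terms; the entropy-flux compatibility relations stated earlier then reorganize this into $-\frac{1}{\e}\dx q(\rho,m\,|\,\bar\rho,\bar m)$ plus a residual that pairs $(\bar\rho_{x_i},\bar m_{x_i})^T$, through $\nabla^2_{(\rho,m)}\eta(\bar\rho,\bar m)$, against $(0,f_i(\rho,m\,|\,\bar\rho,\bar m))^T$; this residual is precisely $-Q$, the zero first entry reflecting the linearity of the mass flux (so that its relative version vanishes). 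Finally, the four $\bar e$-contributions arising through $-\partial_t\eta(\bar\rho,\bar m)$, $-\partial_t[\eta_\rho(\bar\rho,\bar m)](\rho-\bar\rho)$, $-\partial_t[\nabla_m\eta(\bar\rho,\bar m)]\cdot(m-\bar m)$ and $-\nabla_m\eta(\bar\rho,\bar m)\cdot\partial_t(m-\bar m)$ simplify, using the identity $\bar\rho m-\rho\bar m=\rho\bar\rho(m/\rho-\bar m/\bar\rho)$, to exactly $-E$.

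The main obstacle is the friction collapse: showing that five algebraically distinct terms coalesce into a single perfect square $R$ is not apparent \emph{a priori}, and relies on the quadratic kinetic structure $\tfrac12|m|^2/\rho$ of the mechanical energy, whereby the Hessian cross-derivatives conspire with the linear-in-$(m-\bar m)$ friction terms to complete the square. The flux reorganization is a standard Dafermos-type manipulation, while the $\bar e$-simplification is elementary algebra. Assembling the three pieces, and inheriting the sign $\le$ from the entropy inequality for $(\rho,m)$ (the only non-identity step), delivers \eqref{eq:relenEuler}.
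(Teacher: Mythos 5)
Your proposal is correct and follows essentially the same route as the paper: both rely on rewriting the porous-medium solution as an approximate solution of the relaxation system with error $\bar e$, substitute the entropy inequality for $(\rho,m)$, the energy identity for $(\bar\rho,\bar m)$, and the difference equations, then use the entropy consistency (Hessian) relations to extract the $Q$ term, and complete the square for the $1/\e^2$ terms to get $-R$. The only cosmetic difference is organizational --- you expand $\partial_t\eta(\rho,m\,|\,\bar\rho,\bar m)$ directly by the product rule, while the paper first computes $\partial_t\big(\eta_\rho(\bar\rho,\bar m)(\rho-\bar\rho)+\nabla_m\eta(\bar\rho,\bar m)\cdot(m-\bar m)\big)$ as a block (equation \eqref{eq:uno}) and then combines it with \eqref{eq:entro} and \eqref{eq:smoothentr}; your bookkeeping of the five friction contributions and four $\bar e$-contributions is accurate and matches the paper's $D$ and $E$ term-by-term.
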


\begin{remark}\label{rem:relenEulerQE}
The following remarks are in order, concerning the terms appearing on the right of \eqref{eq:relenEuler}:

\medskip
\noindent
(a) The coefficient of the quadratic term $Q$ depends only on $(\bar \rho, \bar m)$; it is explicitly given by
 \begin{equation*}
     \frac{1}{\e}\Big(\eta_{\rho m_{j}}(\bar \rho, \bar m)\bar 
	 \rho_{x_{i}}
	 + \eta_{m_{k}m_{j}}(\bar \rho, \bar m)\pxi \bar m_{k} \Big) = 
	  \frac{1}{\e}\partial_{x_i}\left (\frac{\bar m_j}{\bar\rho} \right) = 
	  -\partial_{x_ix_j}h'(\bar\rho) \, ,
 \end{equation*}
and is thus of $O(1)$ in $\eps$.

\medskip
\noindent
(b) Since $e(\bar \rho, \bar m) = O(\e)$, the coefficient of the  error term $E$ is of $O(\e)$.

\medskip
\noindent
(c) The term  $R(\rho, m\left | \bar \rho, \bar m \right. )$ captures the dissipation of the
relaxation system \eqref{eq:euler} relative to its diffusive scale limit \eqref{eq:pme}.
It turns out to be the  \emph{quadratic part of the dissipative relaxation term with respect 
to $(\bar \rho, \bar m)$}.
Indeed, for
$R(\rho, m) = \nabla_{m}\eta\cdot m = \frac{|m|^{2}}{\rho}$, $R_{\rho}  = \eta_{\rho m_{j}}  m_{j}$,
 $R_{m_{i}}  =  \eta_{m_{i}} + \eta_{m_{i}m_{j}} \bar m_{j}$. Using 
 \begin{equation*}
    \nabla_m \eta(\rho,m) = \frac{m}{\rho},\  \eta_{\rho m_{j}}( \rho,  m)= -\frac{m_{j}}{\rho^{2}}, \  
     \eta_{m_{k}m_{j}}( \rho,  m) = \frac{1}{\rho}\delta_{kj},
 \end{equation*}
we compute the Hessian of $R$,
\begin{equation*}
    \nabla^{2}_{(\rho, m)}R(\rho, m) = 
    \begin{pmatrix}
       \displaystyle{ 2\frac{|m|^{2}}{\rho^{3}}} &
       \displaystyle{ - 2\frac{m_{j}}{\rho^{2}}}
       \\
	\displaystyle{- 2\frac{m_{i}}{\rho^{2}}} & 
	\displaystyle{\frac{2}{\rho}\delta_{ij}}
    \end{pmatrix}_{i,j=1,2,3}
\end{equation*}
and see that it has eigenvalues
\begin{equation*}
    \lambda_{1}=0,\ \lambda_{2,3} = \frac{2}{\rho}> 0,\ \lambda_{4} = 
    \frac{2}{\rho} + \frac{2}{\rho^{3}}|m|^{2}>0 \, ,
\end{equation*}
and is thus positive semidefinite for $\rho$, $\bar \rho >  0$. A direct computation also shows
that $R(\rho, m\left | \bar \rho, \bar m \right. ) = \rho \left |\frac{m}{\rho} - \frac{\bar m}{\bar \rho}  \right |^2$
and justifies the notation used in \eqref{eq:defrhs}$_1$.
The property $R(\rho, m\left | \bar \rho, \bar m \right. ) \geq 0$ is instrumental in the
forthcoming stability analysis; a similar property holds for all examples treated in this article.
\end{remark}

\begin{proof}[Proof of Proposition \ref{prop:relenEuler}]
By hypothesis $(\rho, m)$ satisfies the weak from of the entropy inequality
\begin{equation}
\label{eq:entro}
    \eta(\rho,m)_{t} +\frac{1}{\e}\dx q(\rho, m) \leq - \frac{1}{\e^{2}}
    \nabla_{m}\eta(\rho,m)\cdot m = 
    - \frac{1}{\e^{2}}\frac{|m|^{2}}{\rho} \, .
\end{equation}
Also, $(\bar \rho, \bar m)$ satisfies the energy identity
\begin{align}
    \eta(\bar \rho,\bar m)_{t} +\frac{1}{\e}\dx q(\bar\rho,  \bar m)& = 
    -\frac{1}{\e^{2}}
    \nabla_{m}\eta(\bar \rho, \bar m)\cdot \bar m + 
    \nabla_{m}\eta(\bar \rho, \bar m) \cdot e(\bar \rho, \bar m) \nonumber\\
    &= 
    - \frac{1}{\e^{2}}\frac{|\bar m|^{2}}{\bar \rho}  + 
    \nabla_{m}\eta(\bar \rho, \bar m) \cdot \bar e .
    \label{eq:smoothentr}
\end{align}

From \eqref{eq:euler} and \eqref{eq:eulerBAR} we obtain
\begin{equation}
\label{eq:diff}
\begin{aligned}
( \rho - \bar \rho)_{t} +\frac{1}{\e}  \pxi (m_i - \bar m_i ) &= 0
\\
( m - \bar m)_{t} + \frac{1}{\e} \pxi( f_i (\rho, m) -  f_{i}(\bar \rho, \bar m) )
	&= -\frac{1}{\e^{2}} ( m - \bar m )  - \bar e
\end{aligned}
\end{equation}
and use the smoothness of $(\bar \rho, \bar m)$ and \eqref{eq:eulerBAR} to compute
\begin{align}
    & \pt \Big (\eta_{\rho}(\bar \rho,\bar m)(\rho - \bar \rho)  + 
    \nabla_{m}\eta (\bar \rho,\bar m) \cdot (m - \bar m)  
    \Big )  \nonumber
\\
    &\quad + \frac{1}{\e} \pxi \Big (\eta_{\rho}(\bar \rho,\bar m) 
    (m_{i} - \bar m_{i}) +  \nabla_{m}\eta (\bar \rho,\bar m) \cdot  
    (f_{i}(\rho,m) - f_{i}(\bar 
	\rho, \bar m) \Big) \nonumber
\\
&\ =
    \nabla_{m}\eta(\bar \rho, \bar m)\cdot  \left ( -\frac{1}{\e^{2}} (m -\bar m)  - \bar e \right )
    \nonumber
\\
    &\qquad + \pt \big (\eta_{\rho}(\bar \rho,\bar m)\big )(\rho - \bar 
    \rho) + \pt \big (  \nabla_{m}\eta (\bar \rho,\bar m) \big)\cdot 
    (m-\bar m) \nonumber
\\
    &\qquad + \frac{1}{\e} \pxi\big (\eta_{\rho}(\bar \rho,\bar 
    m)\big ) (m_{i} - \bar m_{i}) + \frac{1}{\e} \pxi \big ( \nabla_{m}\eta (\bar 
    \rho,\bar m)  \big)\cdot  (f_{i}(\rho,m) - f_{i}(\bar 
	\rho, \bar m)  \nonumber
\\
&\ = -\frac{1}{\e^{2}}
    \nabla_{m}\eta(\bar \rho, \bar m)\cdot (m -\bar m) - \nabla_{m}\eta(\bar \rho, \bar m) \cdot
    \bar e \nonumber
\\
    &\qquad + \nabla^{2}_{(\rho,m)}\eta(\bar \rho, \bar m)
     \begin{pmatrix} \bar \rho_t  \\  \bar m_t \end{pmatrix}
      \cdot 
      \begin{pmatrix} \rho - \bar \rho  \\ m -\bar m \end{pmatrix} 
      \nonumber
\\
    &\qquad  
   + \frac{1}{\e} \nabla^{2}_{(\rho,m)}\eta(\bar \rho, \bar m) 
    \begin{pmatrix} \bar \rho_{x_i}  \\  \bar m_{x_i} \end{pmatrix}  
    \cdot 
    \begin{pmatrix} m_i - \bar m_i  \\  f_{i}(\rho,m) - f_{i}(\bar \rho, \bar m) \end{pmatrix}
    \nonumber
\\
 &\ = -\frac{1}{\e^{2}}
    \nabla_{m}\eta(\bar \rho, \bar m)\cdot (m -\bar m) - \nabla_{m}\eta(\bar \rho, \bar m) \cdot
    e (\bar \rho, \bar m) \nonumber
\\
    &\qquad + \nabla^{2}_{(\rho,m)}\eta(\bar \rho, \bar m)
     \begin{pmatrix} 0  \\  -\frac{1}{\eps^2} \bar m + e (\bar \rho, \bar m) \end{pmatrix}
      \cdot 
      \begin{pmatrix} \rho - \bar \rho  \\ m -\bar m \end{pmatrix} 
      \nonumber
\\
    &\qquad 
   + \frac{1}{\e} \nabla^{2}_{(\rho,m)} \eta(\bar \rho, \bar m) 
    \begin{pmatrix} \bar \rho_{x_i}  \\  \bar m_{x_i} \end{pmatrix}  
    \cdot 
    \begin{pmatrix} 0  \\  f_i (\rho , m | \bar \rho , \bar m ) \end{pmatrix}
\label{eq:uno}
\\
&\ =: J\, .
\nonumber
\end{align}
To obtain \eqref{eq:uno} we used the identities
\begin{align*}
&\nabla^{2}_{(\rho,m)}\eta(\bar \rho, \bar m) 
        \begin{pmatrix}  \del_{x_i} \bar m_i  \\  \del_{x_i} f_i (\bar \rho, \bar m) \end{pmatrix}
        \cdot \begin{pmatrix} \rho - \bar \rho  \\ m -\bar m \end{pmatrix} 
\\
&=
 \nabla^{2}_{(\rho,m)}\eta(\bar \rho, \bar m) 
\begin{pmatrix}
0 & \delta_{i1} & \delta_{i2} & \delta_{i3} \\
& & \nabla_{(\rho,m)}f_i(\bar \rho, \bar m)   &
\end{pmatrix}
 \begin{pmatrix} \bar \rho_{x_i} \\ \bar m_{x_i}  \end{pmatrix}
 \cdot \begin{pmatrix} \rho - \bar \rho  \\ m -\bar m \end{pmatrix}  
\\
 &\ = 
  \nabla^{2}_{(\rho,m)}\eta(\bar \rho, \bar m)   \begin{pmatrix} \bar \rho_{x_i} \\ \bar m_{x_i}  \end{pmatrix} \cdot 
  \begin{pmatrix}
0 & \delta_{i1} & \delta_{i2} & \delta_{i3} \\
& & \nabla_{(\rho,m)}f_i(\bar \rho, \bar m)   &
\end{pmatrix} 
\begin{pmatrix} \rho - \bar \rho  \\ m -\bar m \end{pmatrix}
\\
&\ =
\nabla^{2}_{(\rho,m)}\eta(\bar \rho, \bar m)   \begin{pmatrix} \bar \rho_{x_i} \\ \bar m_{x_i}  \end{pmatrix} \cdot
\begin{pmatrix}
m_i - \bar m_i \\ \nabla_{(\rho,m)}f_i(\bar \rho, \bar m) \cdot (m - \bar m) 
\end{pmatrix}\, ,
\end{align*}
(resulting from the entropy consistency relations) and the notation
$$
f_i (\rho , m | \bar \rho , \bar m ) =
f_{i}(\rho,m) - f_{i}(\bar \rho, \bar m) 
    -\nabla_{(\rho,m)}f_i(\bar \rho, \bar m) \cdot ( \rho - \bar \rho , m - \bar m) \, .
$$

Finally, combining
 \eqref{eq:relendef}, \eqref{eq:fluxrelEuler}, \eqref{eq:entro}, \eqref{eq:smoothentr} and \eqref{eq:uno},
we conclude
\begin{align*}
    &\eta(\rho, m \left | \bar \rho, \bar m \right. )_{t} +\frac{1}{\e}\dx q(\rho, m \left | \bar \rho, \bar m \right. )\\
    &\leq 
    -\frac{1}{\e^{2}} \Big [  \nabla_{m}\eta(\rho,m)\cdot m -
    \nabla_{m}\eta(\bar \rho, \bar m)\cdot \bar m  - 
    \nabla_{m}\eta(\bar \rho, \bar m)\cdot (m-\bar m)
    \\
     &   \qquad \qquad \quad - \eta_{\rho m_{j}}(\bar \rho,\bar m)\bar m_{j} 
     (\rho - \bar \rho) -  \eta_{m_{k}m_{l}}(\bar \rho, \bar m)\bar m_{l}(m_{k} - \bar m_{k})\Big ]  
     \\
     & \quad \; - \frac{1}{\e} \nabla^{2}_{(\rho,m)} \eta(\bar \rho, \bar m) 
    \begin{pmatrix} \bar \rho_{x_i}  \\  \bar m_{x_i} \end{pmatrix}  
    \cdot 
    \begin{pmatrix} 0  \\  f_i (\rho , m | \bar \rho , \bar m ) \end{pmatrix}
    \\
     & \quad \; - e_{l}(\bar \rho, \bar m) \Big (\eta_{\rho m_{l}}(\bar 
     \rho,\bar m)(\rho - \bar \rho) +  \eta_{m_{k}m_{l}}(\bar \rho, 
     \bar m) (m_{k} - \bar m_{k}) \Big)
     \\
     &=: -\frac{1}{\eps^2} D - Q - E \, ,
 \end{align*}
where
 \begin{equation*}
  E= e_l (\bar\rho,\bar m) \left ( - 
  \frac{\bar m_{l}}{\bar \rho^{2}}(\rho - \bar \rho) + \frac{1}{\bar 
  \rho}(m_{l} - \bar m_{l}) \right )
  = e(\bar\rho,\bar m) \cdot   \frac{\rho}{\bar \rho} \left (\frac{m}{\rho} -  \frac{\bar m}{\bar \rho}\right ) \, ,
  \end{equation*}
\begin{align*}
    &D =  \frac{|m|^{2}}{\rho} -  \frac{|\bar 
    m|^{2}}{\bar \rho} -  \frac{\bar m}{\bar \rho} \cdot (m - \bar m) 
    +  \frac{|\bar m|^{2}}{\bar \rho^{2}} ( \rho - \bar \rho) - 
    \frac{\delta_{k \ell}}{\bar \rho}\bar m_{\ell}\cdot (m_{k} - \bar 
    m_{k}) \\
    &\ = \rho \left |\frac{m}{\rho} - \frac{\bar m}{\bar 
    \rho} \right 
    |^{2}
\end{align*}
and $Q$ is as in \eqref{eq:defrhs}. 
\end{proof}

\medskip
We conclude this section with two lemmas.  The first establishes (under additional hypotheses on $p$) 
a bound of the quadratic term $Q$ in terms of  the relative entropy \eqref{eq:relendef}.

\begin{lemma}\label{lem:controlQ}
Assume that $p(\rho)$ satisfies for some $A > 0$
\begin{equation}
\label{eq:hyppress}
p''(\rho) \le A \frac{p'(\rho)}{\rho} \quad \forall \ \rho > 0 \, .
\tag {A}
\end{equation}
Then  $h(\rho)$ in \eqref{eq:deffunh} verifies, for some  $c>0$,
\begin{equation}
p(\rho \left | \bar \rho \right. ) \leq c h(\rho \left | \bar \rho \right. ) \qquad \forall \ \rho \, , \, \bar \rho > 0 \, .
\label{eq:phcontrol}
\end{equation}
Also there exists a $C>0$ such that for any $i = 1,2,3$,
\begin{equation}
| f_{i}(\rho, m \left | \bar \rho, \bar m \right. ) | \leq  C \eta (\rho, m \left | \bar \rho, \bar m \right. ).
\label{eq:controlQ}
\end{equation}
\end{lemma}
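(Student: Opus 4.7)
The plan is to prove the two estimates in the stated order, using \eqref{eq:phcontrol} inside the proof of \eqref{eq:controlQ}. For \eqref{eq:phcontrol}, I would start from the Taylor-with-integral-remainder representations
\begin{equation*}
h(\rho \left | \bar \rho \right.) = \int_{\bar \rho}^{\rho} (\rho - s)\, h''(s)\, ds, \qquad p(\rho \left | \bar \rho \right.) = \int_{\bar \rho}^{\rho} (\rho - s)\, p''(s)\, ds,
\end{equation*}
valid for all $\rho, \bar \rho > 0$, with the oriented integral interpreted so that the factor $\rho - s$ has a fixed sign along the integration interval. Hypothesis \eqref{eq:hyppress}, together with the identity $h''(s) = p'(s)/s$ from \eqref{eq:deffunh}, is precisely the pointwise comparison $p''(s) \le A\, h''(s)$; multiplying by the (fixed-sign) factor $(\rho - s)$ and integrating preserves the inequality and yields \eqref{eq:phcontrol} with $c = A$. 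The convexity $h'' > 0$ (from $p' > 0$) moreover ensures $h(\rho | \bar\rho) \ge 0$ as needed.

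For \eqref{eq:controlQ}, I would split the flux \eqref{eq:fluxEuler} into its kinetic part $g_{ij}(\rho, m) := m_i m_j / \rho$ and its pressure part $p(\rho) I_i$. The pressure depends only on $\rho$, so its contribution to $f_i(\rho, m \left| \bar \rho, \bar m \right.)$ is exactly $p(\rho \left| \bar \rho \right.) I_i$. For the kinetic part, introducing the velocity $v = m/\rho$ and using the derivatives $\partial_\rho g_{ij} = -v_i v_j$ and $\partial_{m_k} g_{ij} = \delta_{ik} v_j + \delta_{jk} v_i$, combined with the algebraic identity $m - \bar m = (\rho - \bar \rho)\, \bar v + \rho\,(v - \bar v)$, a short direct computation collapses the quadratic remainder to
\begin{equation*}
g_{ij}(\rho, m \left | \bar \rho, \bar m \right.) = \rho \left(\frac{m_i}{\rho} - \frac{\bar m_i}{\bar \rho}\right)\left(\frac{m_j}{\rho} - \frac{\bar m_j}{\bar \rho}\right).
\end{equation*}
Consequently, by the triangle inequality,
\begin{equation*}
| f_i(\rho, m \left | \bar \rho, \bar m \right.) | \;\le\; \rho \left|\frac{m}{\rho} - \frac{\bar m}{\bar \rho}\right|^2 + p(\rho \left | \bar \rho \right.).
\end{equation*}
Combining this with \eqref{eq:phcontrol} and the definition \eqref{eq:relendef} of the relative entropy, each term on the right is bounded by a constant multiple of $\eta(\rho, m \left| \bar \rho, \bar m \right.)$, so \eqref{eq:controlQ} follows with $C = \max(2, c)$.

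The only mildly delicate step is the algebraic collapse of the kinetic relative flux to the rank-one form $\rho\,(v - \bar v) \otimes (v - \bar v)$; it is a routine symbolic manipulation once the velocity variable is introduced, and I do not anticipate any substantive obstacle. Everything else follows from $p' > 0$, the positivity of the reference density $\bar \rho$, and the one-sided comparison \eqref{eq:hyppress}.
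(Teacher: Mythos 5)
Your proof is correct and follows essentially the same route as the paper: the comparison $p'' \le A\,h''$ fed into a Taylor integral remainder for the relative quantities (the paper writes it as $(\rho-\bar\rho)^2\int_0^1\!\int_0^\tau p''(s\rho+(1-s)\bar\rho)\,ds\,d\tau$, which is equivalent to your single-integral form), followed by the algebraic identity $f_{ij}(\rho,m\,|\,\bar\rho,\bar m) = \rho\,(m_i/\rho-\bar m_i/\bar\rho)(m_j/\rho-\bar m_j/\bar\rho) + p(\rho\,|\,\bar\rho)\,\delta_{ij}$. Your kinetic/pressure split is just a bookkeeping rearrangement of the paper's direct computation; the key rank-one collapse and the final bound with $C=\max(2,c)$ are the same.
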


\begin{proof}
Recall that $h'' = \frac{p'}{\rho}$. Hypothesis \eqref{eq:hyppress} implies $p'' \le A h''$.
One easily checks the identity
$$
\begin{aligned}
p( \rho | \bar \rho)  &= p(\rho) - p ( \bar \rho) - p'(\bar \rho) (\rho - \bar \rho)
\\
&= (\rho - \bar \rho)^2 \int_0^1 \int_0^\tau p'' (s\rho + (1-s) \bar \rho) ds d\tau \, ,
\end{aligned}
$$ 
and a similar identity holds for $h(\rho | \bar \rho)$. Then \eqref{eq:phcontrol} follows from $p'' \le A h''$.

A direct computation using \eqref{eq:fluxEuler} shows 
that, for $i , \, j = 1,2,3$,
 \begin{align*}
     f_{i j}(\rho, m \left | \bar \rho, \bar m \right. )  &= 
  \frac{m_{i}m_{j}}{\rho} - \frac{\bar m_{i}\bar m_{j}}{\bar  \rho} 
  + \frac{\bar m_{i}\bar m_{j}}{\bar \rho^{2}}(\rho - \bar \rho) 
  \\
  &\quad - \frac{1}{\bar \rho}(\delta_{i\ell}\bar m_{j} + \bar m_{i} \delta_{j\ell})(m_{\ell} - \bar m_{\ell}))  
  +  p(\rho  \left | \bar \rho\right.)\delta_{ij}
  \\
    &=  \rho \left (\frac{m_i}{\rho} -  \frac{\bar m_i}{\bar \rho}\right )
    \left (\frac{m_j}{\rho} -  \frac{\bar m_j}{\bar \rho}\right ) 
     +  p(\rho  \left | \bar \rho\right.)\delta_{ij}\, ,
      \end{align*}
and \eqref{eq:phcontrol} gives \eqref{eq:controlQ}.
\end{proof}

The second lemma indicates a relation between  the ``metric'' induced by the relative entropy \eqref{eq:relendef}
and more traditional norms.

\begin{lemma}\label{lem:generalconvEuler3d}
Let $h\in C^0[0,+\infty)\cap C^2(0,+\infty)$ satisfies $h''(\rho)>0$ for $\rho>0$ and
\begin{equation}
\label{eq:growthh}
h(\rho) = \frac{k}{\gamma -1} \rho^\gamma + o(\rho^\gamma) \, , \quad \hbox{as}\ \rho\to +\infty
\end{equation}
for some constant $k>0$ and for  $\gamma >1$. 
If $\bar\rho\in K =[\delta, M]$ with $\delta>0$ and $M<+\infty$, then there exist positive constants
$R_0$ (depending on $K$) and $C_1$, $C_2$ (depending on $K$ and $R_0$)
such that
\begin{equation*}
h(\rho \left | \bar \rho \right.) \geq 
\begin{cases}
C_1 |\rho-\bar\rho|^2, &\hbox{for}\ 0\leq \rho\leq R_0,\ \bar\rho\in K, \\
C_2 |\rho-\bar\rho|^\gamma, &\hbox{for}\ \rho> R_0,\ \bar\rho\in K.
\end{cases}
\end{equation*}
\end{lemma}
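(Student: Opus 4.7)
The plan is to establish the two regimes separately: I would first choose $R_0$ large enough that the asymptotic growth of $h$ dominates the lower order contributions when $\rho > R_0$, and then handle the bounded range $\rho \in (0,R_0]$ by a compactness argument exploiting strict convexity together with Taylor expansion away from the origin.

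For the regime $\rho > R_0$, I would start from
\begin{equation*}
h(\rho \left | \bar \rho \right.) = h(\rho) - h(\bar\rho) - h'(\bar\rho)(\rho - \bar\rho).
\end{equation*}
Since $h$ and $h'$ are continuous on the compact set $K = [\delta, M]$, the terms involving $\bar\rho$ contribute at most $\tilde{C}(1 + \rho)$ uniformly in $\bar\rho \in K$, with $\tilde{C}$ depending only on $K$. Combined with the growth hypothesis \eqref{eq:growthh}, this gives, for $R_0$ sufficiently large depending on $K$,
\begin{equation*}
h(\rho \left | \bar \rho \right.) \geq \frac{k}{2(\gamma-1)}\rho^\gamma, \qquad \rho > R_0,\ \bar\rho \in K.
\end{equation*}
Choosing also $R_0 \geq M$ ensures $\rho > \bar\rho > 0$, hence $(\rho - \bar\rho)^\gamma \leq \rho^\gamma$, and the second bound follows with $C_2 = k/(2(\gamma-1))$.

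For the regime $\rho \in [0, R_0]$, I would split further according to the distance from the diagonal $\{\rho = \bar\rho\}$. When $|\rho - \bar\rho| \geq \delta/2$, the map $(\rho, \bar\rho) \mapsto h(\rho \left | \bar \rho \right.)$ is continuous on the compact set $\{(\rho, \bar\rho) \in [0, R_0] \times K : |\rho - \bar\rho| \geq \delta/2\}$ and strictly positive by strict convexity of $h$, hence attains a positive minimum $c_1 > 0$; since $|\rho - \bar\rho| \leq R_0 + M$, this yields $h(\rho \left | \bar \rho \right.) \geq c_1 (R_0 + M)^{-2} (\rho - \bar\rho)^2$. When $|\rho - \bar\rho| < \delta/2$, both $\rho$ and $\bar\rho$ lie in $[\delta/2, R_0] \subset (0, \infty)$, where $h$ is $C^2$, and Taylor's theorem with remainder yields $h(\rho \left | \bar \rho \right.) = \tfrac{1}{2} h''(\xi)(\rho - \bar\rho)^2$ for some $\xi \in [\delta/2, R_0]$, so $h(\rho \left | \bar \rho \right.) \geq \tfrac{1}{2}\bigl(\min_{[\delta/2, R_0]} h''\bigr) (\rho - \bar\rho)^2$. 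Taking $C_1$ to be the minimum of the two resulting constants closes this case.

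The main subtlety I expect is that $h$ is only assumed to be $C^0$ at $\rho = 0$, with no control on $h''$ near the origin, so one cannot write a Taylor second-derivative remainder uniformly on $[0, R_0] \times K$. The buffer $|\rho - \bar\rho| \geq \delta/2$ circumvents this: on the far-from-diagonal piece the argument is purely one of compactness and strict convexity and uses no derivatives of $h$, while on the near-diagonal piece both arguments are automatically bounded away from $0$ and Taylor applies on a safe compact interval.
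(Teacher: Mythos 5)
Your proof is correct and follows essentially the same route as the paper's: both split at a threshold $R_0$, use the growth hypothesis together with uniform bounds on $h(\bar\rho)$, $h'(\bar\rho)$ for $\bar\rho\in K$ to get the $\gamma$-th power bound for $\rho>R_0$, and use a compactness argument on $[0,R_0]\times K$ for the quadratic bound. The only cosmetic difference is in the bounded regime: the paper defines the ratio $B(\rho,\bar\rho)=h(\rho|\bar\rho)/|\rho-\bar\rho|^2$, extends it continuously to the diagonal (which lies in $[\delta,M]^2\subset(0,\infty)^2$ where $h$ is $C^2$) with value $h''(\bar\rho)/2>0$, and takes the minimum over the compact set; you instead split into near-diagonal and far-diagonal pieces, which is exactly the argument one would unpack to verify that the paper's extended $B$ is jointly continuous. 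Your explicit handling of the $\rho\to 0$ issue is a good instinct, though the paper's formulation sidesteps it automatically since $B$ near $\rho=0$ only invokes $h(\rho)$ (continuous) and $h'$, $h''$ evaluated at $\bar\rho\geq\delta$.
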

\begin{proof}
Since $\bar\rho \in K$, there exist positive constants $A$ and $B$ such that for any $0<\rho <+\infty$ 
\begin{equation*}
h(\rho \left | \bar \rho \right.) = h(\rho) - h(\bar \rho) - h'(\bar \rho)(\rho - \bar \rho)
\geq h(\rho) - A - B \rho\, ,
\end{equation*}
\begin{equation*}
\frac{h(\rho \left | \bar \rho \right.)}{|\rho-\bar\rho|^\gamma} \geq \frac{h(\rho) - A - B \rho}{\rho^\gamma}\, .
\end{equation*}
In view of \eqref{eq:growthh}, there exists $R_0$ depending on $K$ such that
\begin{equation*}
h(\rho \left | \bar \rho \right.)  \ge \frac{1}{2} \frac{k}{\gamma -1} |\rho-\bar\rho|^\gamma 
\, , \quad \text{for $\rho > R_0$ and $\bar\rho\in K$}.
\end{equation*}

Consider next the positive function
\begin{align*}
B(\rho, \bar\rho) & := \frac{h(\rho \left | \bar \rho \right.)}{|\rho-\bar\rho|^2} \\
 & =  \int_0^1 \int_0^\tau h'' (s\rho + (1-s) \bar \rho) ds d\tau,  \qquad
  \hbox{$\rho\in [0,R_0]$, $\bar\rho\in K$, $\rho \ne \bar \rho$}
\end{align*}
and note that $\lim_{\rho\to\bar\rho} B(\rho, \bar\rho) = \frac{h''(\bar \rho)}{2}  >0$
and thus $B(\rho, \bar\rho) \in C^0([0,R_0]\times K)$. Hence, we have
\begin{equation*}
h(\rho \left | \bar \rho \right.)\geq m |\rho-\bar\rho|^2 
\quad \text{for $\rho\in [0,R_0]$ and $\bar\rho\in K$}
\end{equation*}
where $m := \min_{\rho\in [0,R_0], \bar\rho\in K}B(\rho, \bar\rho) >0$.
\end{proof}

\begin{remark}\label{rem:phcontrol} Hypothesis \eqref{eq:hyppress} holds for large classes of pressure laws.
Indeed,  \eqref{eq:hyppress} is trivially satisfied for concave increasing pressures. For convex
pressures, one checks that it holds  for $p(\rho) = k \rho^\gamma$ with $\gamma \ge 1$.
By contrast, \eqref{eq:hyppress} is violated for $p(\rho) = e^\rho$.
For a $\gamma$--law gas with $\gamma > 1$, one computes that $h(\rho) = \frac{1}{\gamma -1}p(\rho)$
and thus \eqref{eq:phcontrol} holds as an equality.

Regarding the growth condition \eqref{eq:growthh}, if  $p\in C^0[0,+\infty)\cap C^2(0,+\infty)$ 
satisfies $p'(\rho)>0$ and
\begin{equation}
\label{eq:pgrowth}
p'(\rho) = k \gamma \rho^{\gamma-1}  + o(\rho^{\gamma-1}) \, , \quad \hbox{as}\ \rho\to +\infty \, ,
\tag {B}
\end{equation}
with $k > 0$, $\gamma > 1$,
then $h(\rho)$ defined through $h''(\rho) = \frac{p'(\rho)}{\rho}$ verifies hypothesis \eqref{eq:growthh}
and the results of Lemma \ref{lem:generalconvEuler3d} apply in that case.
\end{remark}

\subsection{Convergence in the diffusive relaxation limit}\label{subsec:l1}
Proposition \ref{prop:relenEuler} is used in order to prove convergence from the Euler equations 
with friction in the diffusive limit towards the porous media equation. 
We carry out the analysis in two frameworks:
\begin{itemize}
\item multi-d periodic solutions;
\item 1-d solutions in the real line with (possibly distinct) constant states $\rho_{\pm}$ at $\pm\infty$.
\end{itemize}
In both cases, the main hypothesis is that  $\bar \rho$ is a smooth solution of \eqref{eq:pme} 
that sits away from vacuum.
\begin{remark}\label{rem:otherframeworks}
It is worth to observe that other possible frameworks can be analyzed with these techniques, and we restrict our ourselves to the aforementioned cases to avoid further technicalities. For instance, with small modification in the arguments below, we can consider multi-d solutions 
 $(\bar\rho, \bar m)$ such that $\bar\rho\to\rho_*>0$ as $|x|\to+\infty$;  $\bar m  = -\e\nabla p(\bar \rho)$ and such that 
 $\rho\geq 0$, $\rho - \rho_* $, $ m \in L^1([0,T]\times\R^3)$.
\end{remark}
\subsubsection{\bf Multidimensional periodic solutions}
In the periodic case, we work within the following framework, collectively
referred to as $\mathbf{(H_1)}$:
\begin{itemize}
\item[(i)]
$(\rho \, , \, m)  : (0,T)\times \T^3 \to \R^4$ is a (periodic) \emph{dissipative weak solution} of \eqref{eq:euler} 
with $\rho \geq 0$,
satisfying the weak form of  \eqref{eq:euler} and the integrated form of the entropy inequality \eqref{eq:entrEuler}:
\begin{align}
 & \iint_{[0,+\infty)\times\T^3} \left ( \frac{1}{2} \frac{|m |^2}{\rho} + h(\rho) \right ) \dot\theta(t) 
 - \frac{1}{\e^{2}}\frac{|m|^{2}}{\rho}\theta(t) \, dxdt 
 \nonumber
 \\
 &\qquad
 +   \int_{\T^3}  \left ( \frac{1}{2} \frac{|m |^2}{\rho } + h(\rho) \right ) \Big |_{t=0} 
 \theta(0)dx 
 \ge 0\, ,
 \label{dissipsol}
\end{align}
where $\theta(t)$ is a nonnegative Lipschitz test function compactly supported in $[0,T)$. 
The family $(\rho^\eps , m^\eps)$ is
assumed to satisfy the uniform bounds
\begin{align}
\sup_{t\in (0,T)} \int_{\T^3} \rho^\eps  dx &\le K_1 < \infty\, ,
\label{hypCauchy1}
\\
 \sup_{t\in (0,T)} \int_{\T^3}
 \frac{1}{2} \frac{|m^\e |^2}{\rho^\e} + h(\rho^\e ) \, dx
 &\le K_2 < \infty \, , \nonumber
\end{align}
which are natural within the given framework, and follow from corresponding uniform bounds on the initial data.

\item[(ii)]
 $\bar\rho$ is a smooth ($C^3$) periodic solution of the multidimensional porous media equation  \eqref{eq:pme} 
 that avoids vacuum, $\bar \rho \ge c > 0$; $\bar m$ is defined via $\bar m = -\e\nabla p(\bar \rho)$,
\end{itemize}

The function 
\begin{equation}
\varphi(t) = \int_{\T^3} \eta(\rho, m \left | \bar \rho, \bar m\right. ) dx 
\label{eq:norm}
\end{equation}
will be used as a measure to control the distance between two solutions.
We prove:

\begin{theorem}\label{th:finalEuler3d}
Let  $T>0$ be fixed and assume $p(\rho)$ satisfies \eqref{eq:hyppress} and \eqref{eq:pgrowth}.
Under hypothesis $\mathbf{(H_1)}$, the stability estimate
\begin{equation}
\varphi(t) \leq C \big (\varphi(0) + \e^4 \big ), \quad t\in [0,T] \, ,
\label{eq:stabEuler3d}
\end{equation}
holds,  where $C$ is a positive constant depending only on $T$, $K_1$, $\bar\rho$ and its derivatives.
Moreover, if $\varphi(0)\to 0$ as $\e \downarrow 0$, then
\begin{equation}
\sup_{t\in[0,T]}\varphi(t) \to 0, \ \hbox{as}\ \e \downarrow 0.
\label{eq:convEuler3d}
\end{equation}
\end{theorem}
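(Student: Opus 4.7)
The plan is to integrate the pointwise relative entropy inequality \eqref{eq:relenEuler} of Proposition \ref{prop:relenEuler} over $\T^3$ against a test function $\theta(t)$ of the type used in \eqref{dissipsol}. Since $(\rho,m)$ and $(\bar\rho,\bar m)$ are both periodic, the flux term $\frac{1}{\e}\int_{\T^3}\dx q(\rho,m|\bar\rho,\bar m)\,dx$ vanishes, leaving the integrated inequality
\begin{equation*}
\varphi(t) \le \varphi(0) - \frac{1}{\e^{2}}\int_0^t\!\!\int_{\T^3} R(\rho,m|\bar\rho,\bar m)\,dx\,ds - \int_0^t\!\!\int_{\T^3} (Q+E)\,dx\,ds
\end{equation*}
for a.e.\ $t\in(0,T)$. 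The task then splits into a Gronwall-type estimate on the quadratic term $Q$ and a Young-type absorption of the error term $E$ into the dissipation term $R$.

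For the quadratic term $Q$, Remark \ref{rem:relenEulerQE}(a) identifies its coefficient as $-\partial_{x_ix_j}h'(\bar\rho)$, which is uniformly bounded in $\e$ because $\bar\rho\in C^3$ and stays above $c>0$. Combining this with estimate \eqref{eq:controlQ} of Lemma \ref{lem:controlQ}, which uses hypothesis \eqref{eq:hyppress}, yields
\begin{equation*}
\Bigl|\int_{\T^3} Q\,dx \Bigr| \le C\,\|\nabla^2 h'(\bar\rho)\|_{L^\infty(\T^3)} \int_{\T^3} \eta(\rho,m|\bar\rho,\bar m)\,dx = C'\,\varphi(t).
\end{equation*}
For the error term $E$, the bound $|\bar e|\le C\e$ from \eqref{eq:errortermdef} together with Young's inequality gives
\begin{equation*}
|E| \le \frac{1}{2\e^{2}}\,\rho\Bigl|\frac{m}{\rho}-\frac{\bar m}{\bar\rho}\Bigr|^2 + \frac{\e^{2}}{2}\,\frac{|\bar e|^{\,2}\,\rho}{\bar\rho^{\,2}} \le \frac{1}{2\e^{2}}\,R(\rho,m|\bar\rho,\bar m) + C\e^{4}\,\rho,
\end{equation*}
and integration over $\T^3$ combined with the mass bound \eqref{hypCauchy1} absorbs the first piece into $-\frac{1}{\e^2}\int R\,dx$ and leaves an $O(\e^4)$ remainder bounded by $CK_1\e^4 T$.

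Putting these together delivers
\begin{equation*}
\varphi(t) \le \varphi(0) + C\int_0^t \varphi(s)\,ds + C'\e^{4}\,T,
\end{equation*}
and Gronwall's inequality then gives the stability estimate \eqref{eq:stabEuler3d}; the convergence \eqref{eq:convEuler3d} follows at once. The growth hypothesis \eqref{eq:pgrowth} feeds through Lemmas \ref{lem:controlQ} and \ref{lem:generalconvEuler3d} to guarantee that the estimates on $Q$ and $E$ are uniform on the full range of $\rho\ge 0$ compatible with the energy bound.

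The main obstacle is the rigorous derivation of the integrated relative entropy inequality when $(\rho,m)$ is only a dissipative weak solution: the computation \eqref{eq:uno} in the proof of Proposition \ref{prop:relenEuler} uses smoothness, so one must test the weak form of \eqref{eq:diff} against the smooth multipliers $\eta_\rho(\bar\rho,\bar m)$ and $\nabla_m\eta(\bar\rho,\bar m)=\bar m/\bar\rho$, and combine with the weak entropy inequality \eqref{dissipsol}. The fact that $\bar\rho$ is $C^3$ and bounded away from vacuum ensures that these multipliers are smooth and bounded, while the uniform bounds in $\mathbf{(H_1)}$ ensure that every integral involved is finite, allowing the formal identity to pass through to the desired inequality.
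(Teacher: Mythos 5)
Your proposal is correct and follows essentially the same route as the paper: pass to the integrated form of the relative entropy inequality by testing the weak formulations against the smooth multipliers $\eta_\rho(\bar\rho,\bar m)$, $\nabla_m\eta(\bar\rho,\bar m)$ (you rightly flag this as the only nontrivial step, which the paper carries out with the approximating test function $\theta(\tau)\omega(x)$ in \eqref{weakmass}--\eqref{weakmomentum}), bound $|Q|$ by $C\varphi(\tau)$ via Remark \ref{rem:relenEulerQE}(a) and Lemma \ref{lem:controlQ}, absorb $E$ with Young's inequality into $\tfrac{1}{2\e^2}R$ plus an $O(\e^4)$ remainder controlled by \eqref{hypCauchy1}, and close with Gronwall. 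One small inaccuracy: hypothesis \eqref{eq:pgrowth} and Lemma \ref{lem:generalconvEuler3d} are not actually invoked in the stability estimate (only \eqref{eq:hyppress} enters, through Lemma \ref{lem:controlQ}); they would matter only if one wished to translate the relative-entropy bound into more standard norms.
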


\begin{proof}
We proceed to establish the integrated version of \eqref{eq:relenEuler} under 
the regularity framework $\mathbf{(H_1)}$.
To this end, we introduce in \eqref{dissipsol} the choice of test function
\begin{equation}
\theta(\tau) := 
\begin{cases}
1, &\hbox{for}\ 0\leq \tau < t, \\
\frac{t-\tau}{\kappa} + 1, &\hbox{for}\ t\leq \tau < t+\kappa , \\
0, &\hbox{for}\ \tau \geq t+\kappa .
\end{cases}
\label{testtheta}
\end{equation}
Taking the limit $\kappa \downarrow 0$ gives
\begin{align*}
& \left. \int_{\mathbb{T}^3}\left ( \frac{1}{2} \frac{|m|^2}{\rho} + h(\rho ) \right)dx  \right |^{t}_{\tau=0}
\leq  - \frac{1}{\e^{2}}\int_{0}^t\int_{\T^3}\frac{|m|^{2}}{\rho}dxd\tau \, .
\end{align*}
Next, integrating \eqref{eq:smoothentr} over $(0,t) \times \T^3$, gives
\begin{align*}
& \left. \int_{\mathbb{T}^3}\left ( \frac{1}{2} \frac{|\bar m|^2}{\bar \rho} + h(\bar \rho) \right)dx \right |^{t}_{\tau=0}
\leq  \int_{0}^t\int_{\T^3} \left ( - \frac{1}{\e^{2}} \frac{|\bar m|^{2}}{\bar \rho} + \frac{\bar m}{\bar \rho} \cdot \bar e \right ) dxd\tau \, .
\end{align*}

Finally, to justify the calculations leading to \eqref{eq:uno}, we start from the weak form of \eqref{eq:diff}:
\begin{align}
&- \iint {\phi}_t (\rho - \bar \rho) + \frac{1}{\eps} \phi_{x_i} (m_i - \bar m_i) \, dx dt
- \int \phi(x,0) (\rho - \bar \rho) \Big |_{t=0} \, dx = 0\, ,
\label{weakmass}
\\
&- \iint \psi_t \cdot (m - \bar m) + \frac{1}{\eps} \psi_{x_i} \cdot \big ( f_i (\rho, m)  - f_i (\bar \rho, \bar m) \big 
) \, dx dt
\nonumber
\\
&- \int \psi(x,0) \cdot (m - \bar m) \Big |_{t=0} \, dx 
= 
\iint \psi \cdot \left ( -\frac{1}{\eps^2} (m - \bar m) - \bar e \right ) \, dx dt\, ,
\label{weakmomentum}
\end{align}
where $\phi$, $\psi$ are Lipschitz test functions 
compactly supported in $ [0,T)\times \T^3$ 
and $\psi$ 
is vector valued.
Using the test functions
$$
\phi = \theta(\tau)\omega(x) \eta_\rho(\bar\rho,\bar m) \, , \quad
\psi = \theta(\tau)\omega(x)  \nabla_m \eta(\bar\rho,\bar m)
$$
with $\theta(\tau)$ as in \eqref{testtheta}  and $\omega (x) =1$, and upon 
taking $\kappa \downarrow 0$, this gives
\begin{align}
&\left. \int_{\T^3} \Big (\eta_{\rho}(\bar \rho,\bar m)(\rho - \bar \rho)  + 
    \nabla_{m}\eta (\bar \rho,\bar m) \cdot (m - \bar m)  
    \Big ) dx \right |^{t}_{\tau=0}
= \int_0^t \int_{\T^3} J dx dt\, ,
\label{intermed}
\end{align}
where $J$ is as in \eqref{eq:uno}. 
Combining the above inequalities leads to
\begin{equation}
 \varphi(t)+\frac{1}{\e^2}
\int_{0}^t\int_{\T^3} R(\rho, m\left | \bar \rho, \bar m \right. )dx d\tau  \leq   \varphi(0) + \int_{0}^t\int_{\T^3}(|Q|+ |E|)dx d\tau 
    \label{eq:relenEulerint}
    \end{equation}
where $Q$, $E$ and $R$ are given in \eqref{eq:defrhs}.

Using \eqref{eq:defrhs}, Remark \ref{rem:relenEulerQE} (a), Lemma \ref{lem:controlQ} and \eqref{eq:norm}, we deduce
\begin{equation*}
\int_{0}^t\int_{\T^3}|Q|dx d\tau  \leq C_1 \int_{0}^t\varphi(\tau)d\tau \, ,
\end{equation*}
where $C_1$ depends on $\| \partial_{x_ix_j}h'(\bar\rho) \|_{L^\infty}$.
The error term $E$ in \eqref{eq:defrhs} is estimated by
\begin{align*}
\int_{0}^t\int_{\T^3} |E|dx d\tau  & \leq \frac{\e^2}{2}\int_{0}^t\int_{\T^3} \left | \frac{\bar e}{\bar \rho} \right |^2\rho dx d\tau   + \frac{1}{2\e^2}\int_{0}^t\int_{\T^3}\rho \left | \frac{m}{\rho} -  
    \frac{\bar m}{\bar \rho} \right |^2 dx d\tau  
\end{align*}
and, by \eqref{eq:errortermdef} and \eqref{hypCauchy1},
\begin{equation}
\label{errorterm}
\int_{0}^t\int_{\T^3} \left | \frac{\bar e}{\bar \rho} \right |^2\rho dx d\tau \le C_1 \eps^2 \, t \, ,
\end{equation}
where $C_2$  depends on   $K_1$, $T$ and $\bar\rho$ through the following 
norms of derivatives up to third order:
\begin{equation*}
\left \|   \frac{1}{\bar \rho}
\dx \left ( \frac{\nabla_x p(\bar\rho)\otimes \nabla_x p(\bar\rho)}{\bar\rho}\right)  \right\|_{L^\infty}
\quad
 +  \left \|  \frac{1}{\bar \rho} \partial_t\nabla_x p(\bar\rho)\right\|_{{L^\infty}}.
\end{equation*}

Introducing the above estimates into \eqref{eq:relenEulerint}, we obtain 
$$
\begin{aligned}
\varphi(t) &+ \frac{1}{2\e^2} \int_0^t\int _{\T^3} R(\rho, m\left | \bar \rho, \bar m \right. ) dx d\tau  
\\
&\leq \varphi(0) + C_1\int_0^t \varphi(\tau)d\tau + 
C_2 \e^4t \,  .
\end{aligned}
$$
Gronwall's  inequality then implies
\begin{equation*}
\varphi(t) \leq C  \big (\varphi(0) +  \e^4   \big ), \quad t \in (0, T]
\end{equation*}
and  \eqref{eq:convEuler3d} follows.
\end{proof}

\subsubsection{\bf The Cauchy problem on the real line}
Next, we consider the Cauchy problem in one-space dimension 
for 
\begin{equation}
    \begin{cases}
        \displaystyle{\rho_{t} +\frac{1}{\e} m_x =0}&   \\
	& \\
       \displaystyle{ m_{t} + \frac{1}{\e}\left (\frac{m^2}{\rho} 
       + p(\rho) \right)_x
	= -\frac{1}{\e^{2}}m} \, . & 
    \end{cases}
    \label{eq:euler1d}
\end{equation}
To avoid unnecessary technicalities with the behavior as $|x| \to \infty$,  we assume the initial data 
$(\rho_0, m_0)$ take constant values outside a compact set $[-R_0. R_0]$,
$$
\begin{aligned}
( \rho_0 (x) , m_0 (x) ) &= (  \rho_- , 0) \quad \mbox{ for $x < - R_0 $}\, ,
\\
( \rho_0 (x) , m_0 (x) ) &= (  \rho_+ , 0) \quad \mbox{ for $x > R_0 $}
\end{aligned}
$$
for some $\rho_{\pm} > 0$.
By the finite speed of propagation property, any solution $(\rho, m)$ will assume the same values outside the 
cones $x < -R_0 - k t$ and for $x > R_0 + kt$, respectively, with $k$ calculated in terms of the maximum wave speed 
on the range of the data.

Let $\bar\rho > 0$ be a smooth solution of
\begin{equation}
\bar\rho_t - p(\bar\rho)_{xx} = 0
\label{eq:pme1d}
\end{equation}
with initial data $\bar \rho_0$ taking constant values
$$
\rho_0 (x)  = \rho_-  \; \; \mbox{ for $x < - R_0 $} \, , \quad
\rho_0 (x) =  \rho_+ \; \;  \mbox{ for $x > R_0 $} \, ,
$$
outside some compact set $[-R_0, R_0]$, with $\rho_{\pm} > 0$ as above. By standard theory for the
porous media equation (see \cite{Vaz07}), the solution of \eqref{eq:pme1d} satisfies $\bar \rho (x,t)  \ge c > 0$,
and satisfies $\bar\rho (x,t) \to\rho_{\pm}$ as $x\to \pm\infty$ with sufficiently fast decay   (in fact
exponential). Defining 
$\bar m = -\e p(\bar\rho)_x$ , we obtain $\bar m\to 0$ as $x\to \pm\infty$.

By modifying the entropy pair \eqref{eq:mechen}-\eqref{eq:mechflux} (using a trivial linear pair),
we define
\begin{align*}
& \tilde \eta (\rho,m) = \eta (\rho,m) - \frac{h(\rho_+) - h(\rho_-)}{\rho_+ - \rho_-}\Big (\rho - \frac{1}{2}(\rho_+ + \rho_-) \Big) 
- \frac{1}{2}(h(\rho_+) + h(\rho_-)), \\
& \tilde q (\rho,m) = q (\rho,m) - \frac{h(\rho_+) - h(\rho_-)}{\rho_+ - \rho_-}m
\end{align*}
so that $\tilde \eta (\rho_\pm,0)=0$. The resulting $(\tilde \eta - \tilde q)$ is an entropy pair, and vanishes at
the end states $(\rho_{\pm}, 0)$.

We next summarize the framework $\mathbf{(H_2)}$ for the relaxation limit:
\begin{itemize}
\item[(i)]
$(\rho \, , \, m)  : (0,T)\times \R \to \R^2$ with $\rho \ge 0$ is a \emph{dissipative weak solution} 
of \eqref{eq:euler1d}, 
that is, it satisfies the weak form of  \eqref{eq:euler1d} and the integrated form of the entropy inequality 
\begin{align*}
 & \iint_{[0,+\infty)\times\mathbb{R}}\tilde \eta(\rho, m ) \dot\theta(t)dxdt + 
\left. \int_{\mathbb{R}}\tilde \eta(\rho, m )\right |_{t=0} \theta(0)dx 
\\
  &\quad \geq  
    \frac{1}{\e^{2}}\iint_{[0,+\infty)\times\mathbb{R}}\frac{|m|^{2}}{\rho}\theta(t) dxdt  \, ,
\end{align*}
with $\theta(t)$ a non negative Lipschitz 
test function compactly supported in $[0, +\infty)$.
The family $(\rho^\eps , m^\eps)$ is assumed to satisfy the uniform bounds
\begin{align}
\sup_{t\in (0,T)} \left ( \int_{-\infty}^0  | \rho^\eps - \rho_-|   dx  + \int^{\infty}_0  | \rho^\eps - \rho_+|   dx \right ) &\le K_1 < \infty\, ,
\label{hypCauchy1d1}
\\
 \sup_{t\in (0,T)} \int_{\R}
 \tilde \eta (\rho^\eps, m^\eps)  \, dx
 &\le K_2 < \infty\, , \nonumber
\end{align}
with $K_1$, $K_2$ independent of $\eps$. Of course, this dictates analogous uniform bounds on the energy norm
of the initial data $(\rho_0^\eps, m_0^\eps)$.

\item[(ii)]
 $\bar\rho$ is a smooth ($C^3$) solution of \eqref{eq:pme1d} that satisfies $\bar \rho \ge c >0$; 
 $\bar m$ is defined via $\bar m = -\e\nabla p(\bar \rho)$.
\end{itemize}
We now denote by
\begin{equation}
\phi(t) = \int_{\R} \tilde \eta(\rho, m \left | \bar \rho, \bar m\right. ) dx.
\label{eq:norm1d}
\end{equation}
This will replace \eqref{eq:norm} as a yardstick for measuring distance between solutions in the one-dimensional
Cauchy problem. Then we have:

\begin{theorem}\label{th:finalEuler1d}
Let  $T>0$ be fixed and assume \eqref{eq:hyppress} and \eqref{eq:pgrowth} hold. If $(\rho, m)$ and $(\bar \rho, \bar m)$
are under the framework $\mathbf{(H_2)}$, the following stability estimate holds:
\begin{equation*}
\phi(t) \leq C \big (\phi(0) + \e^4 \big ), \quad t \in (0, T] \, ,
\end{equation*}
where $C$ is a constant depending only on $T$, $\rho_\pm$, $\bar\rho$ and its derivatives up to third order.
Moreover, if $\phi(0)\to 0$ as $\e \downarrow 0$, then
\begin{equation*}
\sup_{t\in[0,T]}\phi(t) \to 0, \ \hbox{as}\ \e \downarrow 0.
\end{equation*}
\end{theorem}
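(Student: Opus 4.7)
The plan is to mirror the proof of Theorem \ref{th:finalEuler3d}, replacing the spatial domain $\T^3$ by $\R$ and the entropy pair $(\eta,q)$ by the normalized pair $(\tilde\eta,\tilde q)$ that vanishes at the end-states $(\rho_\pm,0)$. A key preliminary observation is that $\tilde\eta-\eta$ and $\tilde q - q$ are (respectively) affine and linear in $(\rho,m)$, hence their contributions in the quadratic Taylor expansion cancel, giving
\begin{equation*}
\tilde\eta(\rho,m|\bar\rho,\bar m) = \eta(\rho,m|\bar\rho,\bar m),\qquad
\tilde q(\rho,m|\bar\rho,\bar m) = q(\rho,m|\bar\rho,\bar m).
\end{equation*}
Consequently the pointwise identity of Proposition~\ref{prop:relenEuler} continues to hold in the present setting with the same $R$, $Q$, $E$ on the right-hand side.

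First I derive the integrated form of the relative entropy inequality. To this end I insert in the weak formulations of \eqref{eq:entrEuler}, of the energy identity for $(\bar\rho,\bar m)$, and of the difference equations \eqref{eq:diff} the test functions $\theta(\tau)\omega_R(x)\eta_\rho(\bar\rho,\bar m)$ and $\theta(\tau)\omega_R(x)\nabla_m\eta(\bar\rho,\bar m)$, where $\theta$ is as in \eqref{testtheta} and $\omega_R$ is a smooth spatial cut-off with $\omega_R\equiv 1$ on $[-R,R]$. After letting $\kappa\downarrow 0$ I pass to the limit $R\to\infty$. The boundary flux contributions tend to zero because $\bar m = -\eps p(\bar\rho)_x\to 0$, $\bar\rho\to\rho_\pm$ with exponential rate from standard porous media theory (\cite{Vaz07}), while $\rho-\rho_\pm\in L^1$ uniformly by \eqref{hypCauchy1d1} and $\tilde\eta(\rho_\pm,0)=0$. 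This yields the one-dimensional analog of \eqref{eq:relenEulerint}.

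Second I control $Q$ and $E$ exactly as in the periodic proof. Lemma~\ref{lem:controlQ} and Remark~\ref{rem:relenEulerQE}(a) give
\begin{equation*}
\int_0^t\!\!\int_\R |Q|\,dx d\tau \le C_1\int_0^t \phi(\tau)\,d\tau,
\end{equation*}
where $C_1$ depends on $\|\partial_{xx}h'(\bar\rho)\|_{L^\infty}$. For $E$ I use Young's inequality to split
\begin{equation*}
|E|\le \frac{\eps^2}{2}\Big|\frac{\bar e}{\bar\rho}\Big|^2\rho + \frac{1}{2\eps^2}R(\rho,m|\bar\rho,\bar m),
\end{equation*}
absorbing the second term into the dissipation. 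For the first, writing $\rho = (\rho - \rho_\pm) + \rho_\pm$ and exploiting the $L^1$-bound \eqref{hypCauchy1d1} together with the exponential decay of the derivatives of $\bar\rho$ entering \eqref{eq:errortermdef} (so that $\bar e/\bar\rho$ is uniformly bounded in $L^2\cap L^\infty$), the analog of \eqref{errorterm} holds and yields a contribution of $O(\eps^4 t)$. Combining everything and applying Gronwall produces the stability estimate $\phi(t)\le C(\phi(0)+\eps^4)$; the convergence statement then follows immediately.

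The main technical obstacle is the justification of the limit $R\to\infty$ in the cut-off argument, which requires pairing the $L^1$-integrability of $\rho-\rho_\pm$ with the exponential decay of $\bar\rho-\rho_\pm$ and of $\bar m$ to ensure that every boundary term generated by $\omega_R'$ vanishes. Once this step is secured, the remaining manipulations are routine adaptations of the periodic argument.
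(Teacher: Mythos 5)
Your proposal is correct and takes essentially the same route as the paper: derive the integrated relative entropy inequality by inserting the spatial-temporal cut-off test functions built from $\eta_\rho(\bar\rho,\bar m)$ and $\nabla_m\eta(\bar\rho,\bar m)$, invoke Lemma~\ref{lem:controlQ} to bound $Q$ by $\phi$, split $E$ with Young's inequality and use the decomposition $\rho=(\rho-\rho_\pm)+\rho_\pm$ together with \eqref{hypCauchy1d1} and the decay of $\bar e/\bar\rho$ to obtain the $O(\eps^2 t)$ bound (hence $O(\eps^4 t)$ after the $\eps^2/2$ prefactor), and close with Gronwall. The preliminary remark that $\tilde\eta-\eta$ is affine so that $\tilde\eta(\cdot|\cdot)=\eta(\cdot|\cdot)$ is a clean way of making explicit something the paper only uses implicitly; and the order in which you send $\kappa\downarrow0$ and $R\to\infty$ is immaterial.
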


\begin{proof}
Proceeding along the  lines of the proof of Theorem \ref{th:finalEuler3d}, one derives 
the analog of \eqref{eq:relenEulerint} for $\phi$ in \eqref{eq:norm1d}. 
There is however a difference in the derivation as applies to the Cauchy problem: the equations
\eqref{weakmass} and \eqref{weakmomentum} hold for test functions compactly supported in
$[0,T)\times \R$. Thus we introduce the test functions
$$
\varphi = \theta(\tau)\omega(x) \eta_\rho(\bar\rho,\bar m) \, , \quad
\psi = \theta(\tau)\omega(x)  \eta_m (\bar\rho,\bar m)\, ,
$$
where $\theta(\tau)$ defined in \eqref{testtheta} and
\begin{equation*}
\label{eq:theta}
\omega(x) = 
\begin{cases}
1  &\hbox{for}\ -R < x < R, \\
1 + \frac{R - x}{\delta} &\hbox{for}\ R < x < R + \delta \\
1 + \frac{R + x}{\delta} &\hbox{for}\ - R - \delta  < x < - R \\ 
0  &\hbox{for}\ x > R+\delta \; \; \hbox{or} x < - R - \delta \, ,
\end{cases}
\end{equation*}
into \eqref{weakmass} and \eqref{weakmomentum}. Sending $R \to \infty$,
using  the asymptotic properties in $x$ of $(\rho, m)$ and $(\bar \rho, \bar m)$, and subsequently sending
$\kappa \downarrow 0$,  we obtain the analog of \eqref{intermed} and through that the analog of 
\eqref{eq:relenEulerint}.

A second difference lies in replacing \eqref{errorterm} by the estimation
$$
\begin{aligned}
\int_{0}^t\int_{\R} \left | \frac{\bar e}{\bar \rho} \right |^2\rho dx d\tau &\le 
\left \| \frac{\bar e}{\bar \rho}  \right \|^2_{L^\infty} 
\int_{0}^t\int_{\R} \left | \rho - \big ( \rho_- \charf_{x < 0}  + \rho_+ \charf_{x < 0} \big ) \right | dx d\tau 
\\
&\quad + 
\max\{ \rho_-, \rho_+\} \int_{0}^t\int_{\R} \left | \frac{\bar e}{\bar \rho} \right |^2 dx d\tau
\\
&\le C \eps^2 t\, ,
\end{aligned}
$$
where we used \eqref{eq:errortermdef} and the constant $C$  depends on $T$,  $K_1$ in \eqref{hypCauchy1d1}, 
and also on $\bar\rho$ through the $L^\infty$ norms
of space-time derivatives up to third order and the norms 
\begin{equation*}
\Big \| \frac{\dx \left ( \frac{\nabla_x p(\bar\rho)\otimes \nabla_x p(\bar\rho)}{\bar\rho}\right)}{\bar\rho} 
\Big \|_{L_t^\infty (L^2(\R)) } +  \left \|  \frac{\partial_t\nabla_x p(\bar\rho)}{\bar\rho}\right\|_{L_t^\infty (L^2(\R))}.
\end{equation*}
Again using Gronwall, we deduce
\begin{equation*}
\phi(t) \leq C  \big (\phi(0) +  \e^4   \big ), \quad t \in (0, T]
\end{equation*}
which completes the proof.
\end{proof}

\subsection{Relative entropy for entropic measure-valued solutions}
\label{subseq:relenmv}

A variant of the relative entropy identity can be derived for comparing entropic measure-valued solutions 
of \eqref{eq:euler} with smooth solutions of \eqref{eq:pme}. Such calculations are in the spirit of the recent works
\cite{BDS11, DST12}, the difference here being that two dissipative systems are compared.

Let $\nu = \big\{ \nu_{x,t} \big\}_{(x,t) \in Q_T}$ be a parametrized family of probability measures  (Young measures)
that acts on continuous functions $f(\lambda_\rho, \lambda_m)$, $(\lambda_\rho, \lambda_m) \in \R^+ \times \R^3$,
via
$$
\langle \nu_{x,t} , f \rangle = \int f(\lambda_\rho, \lambda_m) d \nu (\lambda)
$$
and   such that the integral (when defined) is measurable in $(x,t) \in Q_T$.
A measure-valued solution of \eqref{eq:euler} consists of a Young measure $\big\{ \nu_{x,t} \big\}_{(x,t) \in Q_T}$
with averages
\begin{equation}
\langle \nu_{x,t} , \lambda_\rho \rangle  = \rho \, , \quad 
\langle \nu_{x,t} , \lambda_m \rangle = m \, ,
\label{averages}
\end{equation}
that satisfies in the sense of distributions the measure-valued version of \eqref{eq:euler}
\begin{equation*}
    \begin{aligned}
    \rho_{t} +\frac{1}{\e}\dx m &= 0
 \\
      m_{t} + \frac{1}{\e}\dx \big\langle \nu_{x,t} , \frac{\lambda_m\otimes \lambda_m}{\lambda_\rho} 
       + p( \lambda_\rho) \,  I \big\rangle
	&= -\frac{1}{\e^{2}}m\, .
    \end{aligned}
\end{equation*}
The Young-measure  $\nu = \big\{ \nu_{x,t} \big\}_{(x,t) \in Q_T}$ is called an entropy measure valued solution
 if it also satisfies in the sense of distributions the averaged version of the entropy inequality
 \begin{equation}
\label{eq:mventro}
    \del_t \langle \nu_{x,t} \, , \, \eta(\lambda_\rho, \lambda_m) \rangle +
    \frac{1}{\e}\dx \langle \nu_{x,t} \, , \,  q(\lambda_\rho, \lambda_m) \rangle \leq  
    - \frac{1}{\e^{2}}   \Big\langle \nu_{x,t} \, , \,  \frac{|\lambda_m|^{2}}{\lambda_\rho} \Big\rangle \, ,
\end{equation}
for $\eta - q$ as in \eqref{eq:mechen}-\eqref{eq:mechflux}.

\begin{proposition}
Let $\nu = \big\{ \nu_{x,t} \big\}_{(x,t) \in Q_T}$ satisfying \eqref{averages} be an entropy measure-valued solution
of \eqref{eq:euler}, and let $(\bar \rho, \bar m)$ be a smooth solution of \eqref{eq:pmeref}. 
Then, we have the following averaged relative entropy inequality
\begin{equation}
\begin{aligned}
 \del_t  \langle \nu_{x,t} \, , \, \eta (\lambda_\rho , \lambda_m | \bar \rho , \bar m ) \rangle
 &+ \frac{1}{\e}\dx \langle \nu_{x,t} \, , \, q (\lambda_\rho , \lambda_m | \bar \rho , \bar m ) \rangle 
 \\
&\le - \frac{1}{\e^2}  \Big\langle \nu_{x,t} \, , \,  
     \lambda_\rho \Big |\frac{\lambda_m}{\lambda_\rho} - \frac{\bar m}{\bar \rho}  \Big |^2 \Big\rangle
 - \mathcal{Q} -\mathcal{E}\, ,
\end{aligned}
\label{eq:mvrelenEuler}
\end{equation}
where
\begin{align*}
& \mathcal{Q} =  
    \frac{1}{\e} \nabla^{2}_{(\rho,m)} \eta(\bar \rho, \bar m) 
    \begin{pmatrix} \bar \rho_{x_i}  \\  \bar m_{x_i} \end{pmatrix}  
    \cdot 
    \begin{pmatrix} 0  \\  \big\langle \nu_{x,t} ,  f_i (\lambda_\rho , \lambda_m | \bar \rho , \bar m ) \big\rangle
     \end{pmatrix}  \, ,
    \\
& \mathcal{E} =   \frac{e(\bar \rho, \bar m)}{\bar \rho} \cdot  
  \Big\langle \nu_{x,t} , \lambda_\rho \left ( \frac{\lambda_m}{\lambda_\rho} - \frac{\bar m}{\bar \rho} \right )
   \Big\rangle \, ,
\nonumber
\end{align*}    
and $e(\bar \rho, \bar m)$ is defined in \eqref{eq:errortermdef}.
\end{proposition}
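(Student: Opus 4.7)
The proof mirrors the one-solution computation of Proposition \ref{prop:relenEuler}, the essential new point being the bookkeeping of which quantities are linear and which are nonlinear in $(\lambda_\rho, \lambda_m)$ under the Young-measure average. Since $\rho = \langle \nu, \lambda_\rho\rangle$ and $m = \langle \nu, \lambda_m\rangle$ and since $(\bar\rho, \bar m)$ is deterministic, the averaged relative entropy splits as
\[
\langle \nu_{x,t}, \eta(\lambda_\rho, \lambda_m \,|\, \bar\rho, \bar m) \rangle
= \langle \nu_{x,t}, \eta \rangle - \eta(\bar\rho, \bar m) - \eta_\rho(\bar\rho, \bar m)(\rho - \bar\rho) - \nabla_m \eta(\bar\rho, \bar m)\cdot (m - \bar m),
\]
and an analogous decomposition holds for $\langle \nu, q_i(\cdot\,|\,\bar\rho,\bar m)\rangle$, in which the genuinely nonlinear contribution is carried by the averaged flux $\langle \nu, f_i(\lambda_\rho, \lambda_m) \rangle$. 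The task thus reduces to computing the space-time evolution of each piece separately and recombining.

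The plan is to apply, respectively: (i) the averaged entropy inequality \eqref{eq:mventro} to control the evolution of $\langle \nu, \eta \rangle$; (ii) the energy identity \eqref{eq:smoothentr} for $\eta(\bar\rho, \bar m)$, which is valid because $(\bar\rho, \bar m)$ is smooth and satisfies \eqref{eq:eulerBAR}; and (iii) the distributional measure-valued mass and momentum equations, tested against the smooth deterministic factors $\eta_\rho(\bar\rho, \bar m)$ and $\nabla_m\eta(\bar\rho, \bar m)$ respectively. In step (iii) the time- and space-derivatives of the test functions are expanded via the chain rule together with \eqref{eq:eulerBAR}, reproducing the calculation that led to \eqref{eq:uno}. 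The only modification is that wherever $f_i(\rho,m)$ or $|m|^2/\rho$ appeared in Proposition \ref{prop:relenEuler}, one now has $\langle \nu, f_i(\lambda_\rho, \lambda_m) \rangle$ or $\langle \nu, |\lambda_m|^2/\lambda_\rho\rangle$ respectively, while the entropy consistency relations used in \eqref{eq:uno} apply pointwise in $(\lambda_\rho, \lambda_m)$ inside the bracket.

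Summing the four contributions, the pieces that are linear in $(\lambda_\rho-\bar\rho, \lambda_m - \bar m)$ pull out of $\langle \nu, \cdot\rangle$ and assemble, exactly as in Proposition \ref{prop:relenEuler}, into $-\mathcal{Q} - \mathcal{E}$. The remaining purely quadratic piece is the averaged relaxation term, which, by the same pointwise algebraic identity that transforms the expression $D$ at the end of the proof of Proposition \ref{prop:relenEuler} into $\rho|m/\rho - \bar m/\bar\rho|^2$, collapses to $-\frac{1}{\eps^2}\langle \nu, \lambda_\rho|\lambda_m/\lambda_\rho - \bar m/\bar\rho|^2\rangle$; this yields \eqref{eq:mvrelenEuler}. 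The main (and essentially the only) technical point is to verify that every algebraic manipulation in the proof of Proposition \ref{prop:relenEuler} is pointwise in $(\lambda_\rho, \lambda_m)$ and therefore commutes with the Young-measure averaging, so that no inequality beyond \eqref{eq:mventro} itself is used to bring nonlinear quantities out of the bracket; once this is observed, no new analytical estimate is required.
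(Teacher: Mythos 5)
Your proposal is correct and takes essentially the same route as the paper: the paper's proof consists precisely of (i) defining the averaged relative entropy by applying $\langle\nu_{x,t},\cdot\rangle$ to the Taylor expansion (using that the linearization terms are affine in $(\lambda_\rho,\lambda_m)$ and hence come out via \eqref{averages}), and (ii) repeating the computation of Proposition~\ref{prop:relenEuler} with \eqref{eq:mventro}, \eqref{eq:smoothentr} and the averaged version of \eqref{eq:diff}, noting that all algebraic identities commute with the Young-measure average. Your observations about which pieces are linear versus nonlinear and why the dissipation $D$ collapses pointwise to $\lambda_\rho|\lambda_m/\lambda_\rho-\bar m/\bar\rho|^2$ inside the bracket are exactly the justification the paper leaves implicit when it says "following verbatim."
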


\begin{proof}
We use \eqref{eq:relendef} to define the averaged relative entropy
$$
\begin{aligned}
\langle \nu_{x,t} \,  &, \, \eta (\lambda_\rho , \lambda_m | \bar \rho , \bar m ) \rangle
=
\langle \nu_{x,t} \, , \, \eta (\lambda_\rho , \lambda_m ) \rangle
- 
\eta (\bar \rho , \bar m ) 
\\
&-
\eta_\rho (\bar \rho , \bar m ) \langle \nu_{x,t} , \lambda_\rho - \bar \rho \rangle
-
\nabla_m \eta (\bar \rho , \bar m ) \cdot
\langle \nu_{x,t} , \lambda_m - \bar m \rangle\, .
\end{aligned}
$$
The inequality \eqref{eq:mvrelenEuler} is built by using \eqref{eq:mventro}, \eqref{eq:smoothentr}
and the averaged version of \eqref{eq:diff} and following verbatim the steps and calculations
in the proof of Proposition \ref{prop:relenEuler}.
\end{proof}

\section{The $p$--system with damping}\label{sec:psystem}
The $p$-system with damping in one space dimension is the system of conservation laws
 \begin{equation}
 \label{eq:main}
 \begin{aligned}
		u_{t}  - \frac{1}{\e}v_{x} &=0
\\
		v_{t} - \frac{1}{\e}\tau(u)_{x} &=  -
		\frac{1}{\e^{2}} v  \, , 
\end{aligned}
\end{equation}	
where $\tau$ satisfies $\tau'(u)>0$ to guarantee strict hyperbolicity. 
The system \eqref{eq:main}  is a model  either for elasticity with friction
or for isentropic gas dynamics in Lagrangian coordinates (denoted by $(x,t)$).
Then  $u$ stands for the strain (or the specific volume for gases), $v$ for the velocity and 
$\tau$ for the stress.

In the high friction limit $\eps \to 0$, solutions of \eqref{eq:main} 
converge towards a solution of the parabolic equation (see \cite{MMS88})
\begin{equation}
    u_{t} - \tau(u)_{xx} = 0 \, .
    \label{eq:equil}
\end{equation}
We will indicate in this section a simple proof of that convergence using the relative entropy identity.

For concreteness, we interpret \eqref{eq:main} as a model for shear motions, $u, v$ take values in $\R$.
We place the hypothesis that $\tau : \R \to \R$ satisfies $\tau'(u) > 0$ and the growth assumptions
\begin{equation}
\label{eq:laggrowth}
\tau(u)  = \pm |u|^p + o(|u|^p) \, , \quad \hbox{as}\ u\to \pm \infty \, ,
\tag {H}
\end{equation}
for some $p \ge 1$.

\subsection{Preliminaries}
The approach uses the mechanical energy
\begin{equation*}
    \mathcal{E}(u, v) = \frac{1}{2}v^{2} + W(u) \, , \quad \hbox{where} \quad W(u) = \int_0^{u}\tau(s)ds 
\end{equation*}
is the stored energy. The associated flux is 
\begin{equation*}
    \mathcal{F}(u, v) = -v \tau(u)
\end{equation*}
and they satisfy the entropy inequality
\begin{equation}\label{eq:entropy}
    \mathcal{E}(u, v)_{t} + \frac{1}{\e}\mathcal{F}(u, v)_{x}  \leq 
    -\frac{1}{\e^{2}}v^{2} \leq 0
\end{equation}
indicating the dissipation of the mechanical energy.

The minimum of the mechanical energy $\mathcal{E}(u,v)$ on the 
``equilibrium manifold'' of the relaxation process $\mathcal{M} = \{ (u,v) : v=0 \}$ 
 is achieved and is given by
\begin{equation*}
    W(u) = \mathcal{E}(u,0) = \min_{\mathcal{M}} \mathcal{E}(u,v)\, .
\end{equation*}
Moreover, solutions of  \eqref{eq:equil} satisfy the following energy estimate:
  \begin{equation*}
    \mathcal{E}(u, 0)_{t} 
= (\tau(u) \tau(u)_{x})_{x} - \big ( \tau(u)_{x} \big)^{2},
\end{equation*}
or equivalently
\begin{equation}
    \mathcal{E}(u, 0)_{t} + \mathcal{F}(u, \tau(u)_{x})_{x} = - \big ( \tau(u)_{x} \big)^{2}.
    \label{eq:entropyeq}
\end{equation}
Relation \eqref{eq:entropyeq} captures the equilibrium version of 
\eqref{eq:entropy}, as can be seen by applying the  Hilbert expansion to the relaxation system
\eqref{eq:main}.

Indeed, introducing the Hilbert expansion
\begin{align*}
     & u = u_{0} + \e u_{1} + \e^{2}u_{2}+ \ldots  \\
     & v = v_{0} + \e v_{1} + \e^{2}v_{2}+ \ldots 
\end{align*}
to \eqref{eq:main}, we see after collecting the terms of similar orders that
\begin{align*}
&O(\e^{-1}) \qquad &\px v_{0} = 0\, ,
\\
&O(\e^{0})    &\pt u_{0} + \px v_{1} = 0\, ,
\end{align*}
and
\begin{align*}
&O(\e^{-2})  &v_{0} = 0\, ,
\\
&O(\e^{-1}) \qquad &v_{1} = p(u_{0})_{x}\, ,
\\
&O(\e^{0})    & v_{2} =  (p'(u_{0})u_{1})_{x} \, .
\end{align*}
In particular, we recover the equilibrium relation $v_{0} = 0$, the 
Darcy's law $v_{1} = \tau(u_{0})_{x}$, and the diffusion
equation \eqref{eq:equil} satisfied by $u_{0}$ at equilibrium.
If the same expansion is introduced in \eqref{eq:entropy}, 
we obtain
\begin{equation*}
    \mathcal{E}(u_{0},0)_{t} -  \big ( 
    \tau(u_{0})v_{1} \big )_x =  - v_{1}^{2} 
\end{equation*}
which yields \eqref{eq:entropyeq} upon using Darcy's law $v_{1} = \tau(u_{0})_{x}$.

\subsection{Relative entropy estimate and  study of the relaxation limit}\label{subsec:relenpsystem}
To analyze the relaxation process, we consider the quadratic part of 
$\mathcal{E}(u, v)$ with respect to the ``algebraic--differential 
equilibrium'' $(\bar u, \bar v )$, where $\bar u = u_{0}$ 
and  $\bar v = \e v_{1} = \e 
\tau(\bar u)_{x}$. Namely,
\begin{align*}
    \mathcal{E}(u, v  \left | \bar u, \bar v\right. ) &= \mathcal{E} (u, v) - \mathcal{E} (\bar u, \bar v) - 
    \mathcal{E}_{u} (\bar u, \bar v) (u - \bar u) - 
    \mathcal{E}_{v} (\bar u, \bar v) (v - \bar v) \nonumber\\
    & = \frac{1}{2}(v-\bar v)^2 + W(u \left | \bar u \right.)\, .
\end{align*}
As  corresponding flux we shall consider
\begin{align*}
    \mathcal{F}(u, v  \left | \bar u, \bar v\right. ) &= \mathcal{F} (u, v) - \mathcal{F} (\bar u, \bar v) 
    + \mathcal{E}_{u} (\bar u, \bar v) ( v - \bar v)  
    + \mathcal{E}_{v}(\bar u, \bar v) (\tau(u) - \tau(\bar u)) \\
    &= - (v-\bar v)(\tau(u)-\tau(\bar u))\, .
\end{align*}
As in the previous section, to simplify the calculations, 
we  rewrite the equilibrium equation \eqref{eq:equil} as follows:
\begin{equation}
	    \begin{cases}
		\displaystyle{\bar u_{t}  - \frac{1}{\e} \bar v_{x} =0} & \\
		& \\
		\displaystyle{\bar v_{t} - \frac{1}{\e}\tau(\bar u)_{x} =  -
		\frac{1}{\e^{2}} \bar v + \bar v_{t}}\, . &
		\end{cases}
		\label{eq:equilrev}
\end{equation}	
In this way, we are able to treat the term $\bar v_{t} = \e \tau(\bar u)_{xt}$ as an error of 
order $O(\e)$.
A direct computation, along the lines  of Proposition \ref{prop:relenEuler} gives: 

\begin{proposition}\label{prop:relenpsys}
For any weak, entropy solution $(u,v)$ of \eqref{eq:main} and any smooth solution $(\bar u, \bar v)$ of \eqref{eq:equilrev} it holds:
\begin{equation}
  \mathcal{E}(u, v \left | \bar u, \bar v \right. )_t + \frac{1}{\e} \mathcal{F}(u, v\left | \bar u, \bar v \right. )_x \\
  \leq -\frac{1}{\e^2} (v-\bar v)^2  + 
 \tau(\bar u)_{xx}\tau(u \left | \bar u \right. ) - \e \tau(\bar u)_{xt}(v - \bar v)\, .
    \label{eq:relenpsys}
    \end{equation}
\end{proposition}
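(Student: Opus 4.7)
The plan is to adapt the proof of Proposition~\ref{prop:relenEuler} to the scalar one-dimensional setting of \eqref{eq:main}, exploiting the hyperbolic-like reformulation \eqref{eq:equilrev} to put the diffusion equation on the same footing as the relaxation system. First, I have the weak entropy inequality
$$\mathcal{E}(u,v)_t + \tfrac{1}{\e}\mathcal{F}(u,v)_x \le -\tfrac{1}{\e^2}v^2$$
for $(u,v)$. Second, a direct chain-rule computation applied to $\mathcal{E}(\bar u, \bar v)$, using \eqref{eq:equilrev} and the smoothness of $(\bar u, \bar v)$, yields the energy identity
$$\mathcal{E}(\bar u, \bar v)_t + \tfrac{1}{\e}\mathcal{F}(\bar u, \bar v)_x = -\tfrac{1}{\e^2}\bar v^2 + \bar v\,\bar v_t,$$
in which the extra $\bar v\,\bar v_t$ is the energetic imprint of the error $\bar v_t$ on the right-hand side of \eqref{eq:equilrev}.

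Next I differentiate the linear interaction $\tau(\bar u)(u-\bar u) + \bar v(v-\bar v)$ in $t$ and its flux counterpart $\tau(\bar u)(v-\bar v) + \bar v(\tau(u)-\tau(\bar u))$ in $x$, substituting the difference equations $(u-\bar u)_t = \tfrac{1}{\e}(v-\bar v)_x$ and $(v-\bar v)_t = \tfrac{1}{\e}(\tau(u)-\tau(\bar u))_x - \tfrac{1}{\e^2}(v-\bar v) - \bar v_t$, along with $\bar u_t = \tfrac{1}{\e}\bar v_x$. Combining this with the two identities above produces the relative entropy balance. The various terms then regroup as in the Euler case: the pairs $\pm \tfrac{1}{\e}\tau(\bar u)(v-\bar v)_x$ and $\pm\tfrac{1}{\e}\bar v(\tau(u)-\tau(\bar u))_x$ cancel (this is the entropy consistency relation); the combination $\tfrac{1}{\e}\bar v_x(\tau(u)-\tau(\bar u)) - \tfrac{1}{\e}\tau'(\bar u)\bar v_x(u-\bar u)$ telescopes to $\tfrac{1}{\e}\bar v_x\,\tau(u|\bar u)$, which upon applying $\bar v = \e\tau(\bar u)_x$ becomes $\tau(\bar u)_{xx}\,\tau(u|\bar u)$, the scalar analogue of the $Q$-term in \eqref{eq:defrhs}; the terms of order $1/\e^2$ assemble into $-\tfrac{1}{\e^2}[v^2 - \bar v^2 - 2\bar v(v-\bar v)] = -\tfrac{1}{\e^2}(v-\bar v)^2$, the relative dissipation; and the two $\bar v\,\bar v_t$ contributions cancel, leaving only the error term $-\bar v_t(v-\bar v) = -\e\,\tau(\bar u)_{xt}(v-\bar v)$.

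The main difficulty is the routine-but-delicate bookkeeping of the many terms produced by chain rule and substitution, but this step is conceptually identical to the Euler case and in fact simpler, since the Hessian of $\mathcal{E}$ is the diagonal matrix $\operatorname{diag}(\tau'(\bar u),1)$ and no vector/tensor indices need to be tracked. Alternatively, one can invoke directly the general template of Proposition~\ref{prop:relenEuler} with the scalar entropy pair \eqref{eq:mechen}--\eqref{eq:mechflux} and the flux remainder $f(u,v|\bar u, \bar v) = -\tau(u|\bar u)$, which yields \eqref{eq:relenpsys} without redoing the computation from scratch.
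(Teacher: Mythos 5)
Your computation is correct and is essentially the proof the paper intends: the paper states only that Proposition~\ref{prop:relenpsys} follows ``along the lines of Proposition \ref{prop:relenEuler}'', and both of your two routes --- the explicit chain-rule bookkeeping on the affine correction $\tau(\bar u)(u-\bar u)+\bar v(v-\bar v)$ and its flux companion, and the alternative of instantiating the general template with Hessian $\operatorname{diag}(\tau'(\bar u),1)$, flux remainder $(0,-\tau(u|\bar u))$, and error $\bar e=\bar v_t=\e\tau(\bar u)_{xt}$ --- reproduce exactly that. The term accounting (cancellation of the two $\bar v\bar v_t$ contributions, assembly of $1/\e^2$ terms into $-(v-\bar v)^2/\e^2$, telescoping of $\frac{1}{\e}\bar v_x[\tau(u)-\tau(\bar u)-\tau'(\bar u)(u-\bar u)]$ into $\tau(\bar u)_{xx}\tau(u|\bar u)$ via Darcy's law $\bar v=\e\tau(\bar u)_x$) all checks out.
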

The terms in the right hand side of \eqref{eq:relenpsys} are analogous to the terms in \eqref{eq:relenEuler} of
 Proposition \ref{prop:relenEuler} for the Eulerian case, namely, the first term is  dissipative and is due to 
 the damping of the relaxation system relative to its diffusion limit, the second is quadratic in the flux, and the last term is a linear error term. The quadratic term is estimated with the help of the following lemma
from \cite{DST12}.

\begin{lemma}
\label{lem:lembound}
Let $\tau \in C^2 (\R)$ satisfy $\tau'(u) > 0$ and \eqref{eq:laggrowth}. If $\bar u$ takes values in a compact set $K$,
there exists constant $C$ such that
$$
\tau( u | \bar u) \le C  \, W( u | \bar u) \, , \quad \forall \;  u \in \R \, , \; \bar u \in K\, .
$$
\end{lemma}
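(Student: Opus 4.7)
The plan is to show that the ratio
$$
B(u,\bar u) \,:=\, \frac{\tau(u|\bar u)}{W(u|\bar u)}
$$
is bounded above on $(\R\setminus\{\bar u\})\times K$, whence the claim follows with $C=\sup B$. Note that $W''=\tau'>0$, so $W$ is strictly convex; by Taylor's theorem $W(u|\bar u)=\tfrac12\tau'(\xi)(u-\bar u)^2>0$ for $u\ne\bar u$, and similarly $\tau(u|\bar u)=\tfrac12\tau''(\xi')(u-\bar u)^2$, so the ratio $B$ has a removable singularity at the diagonal, with limiting value $\tau''(\bar u)/\tau'(\bar u)$ as $u\to\bar u$.

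The argument then proceeds by splitting into two regimes. First, for $|u|\le R$ and $\bar u\in K$, the Taylor representations above combined with the continuity of $\tau'$, $\tau''$ and the strict positivity of $\tau'$ on $K$ show that $B$ extends continuously to the compact set $[-R,R]\times K$; hence $B$ is bounded there. Second, for $|u|\ge R$ with $R$ large, I use the growth hypothesis \eqref{eq:laggrowth}: since $\tau(u)=\pm|u|^p+o(|u|^p)$, integration yields $W(u)=\tfrac{|u|^{p+1}}{p+1}+o(|u|^{p+1})$ as $u\to\pm\infty$. With $\bar u\in K$ bounded, the $\bar u$-dependent corrections in
$$
\tau(u|\bar u)=\tau(u)-\tau(\bar u)-\tau'(\bar u)(u-\bar u),\qquad
W(u|\bar u)=W(u)-W(\bar u)-\tau(\bar u)(u-\bar u)
$$
are at most affine in $u$, so they are absorbed by the leading terms for $|u|$ large. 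One obtains, uniformly in $\bar u\in K$,
$$
\tau(u|\bar u)\le 2|u|^p,\qquad W(u|\bar u)\ge \tfrac{1}{2(p+1)}|u|^{p+1}\qquad\text{for }|u|\ge R,
$$
so that $B(u,\bar u)\le C/|u|$, which is bounded.

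Choosing $R$ large enough that the asymptotic bounds hold and then taking the maximum of the two bounds yields the constant $C$ and completes the argument. The main delicacy is making $R$ and the estimates \emph{uniform} in $\bar u\in K$: this is where compactness of $K$ and the explicit form of the subleading terms (controlled by $\|\tau\|_{C^1(K)}$ and $|K|$) are used to ensure the transition between the two regimes is independent of $\bar u$. Note that the inequality is trivially valid in the range where $\tau(u|\bar u)\le 0$, since $W(u|\bar u)>0$; so the content of the estimate is in bounding the positive part of the ratio, as the two-regime analysis does.
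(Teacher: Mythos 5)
Your proposal is correct and follows essentially the same route as the paper: split into the bounded region $|u|\le R$, where $\tau\in C^2$, $\tau'>0$, and compactness give the bound, and the region $|u|\ge R$, where the growth hypothesis \eqref{eq:laggrowth} forces $W(u|\bar u)$ to dominate $\tau(u|\bar u)$ by an extra factor of $|u|$; the presentation via the ratio $B$ and its continuous extension across the diagonal is a cosmetic variant of the paper's direct estimates.
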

   
\begin{proof}
Since $\bar u \in K$, $K$ a compact set, using \eqref{eq:laggrowth} there exists a constant $C$ such that
\begin{equation*}
|\tau(u \left | \bar u \right. )| \leq C ( |u|^p +1) \, , \quad u \in \R \, , \; \bar u \in K \, .
\end{equation*}
Moreover, 
\begin{equation*}
\lim_{|u|\to +\infty}\frac{W(u)}{|u|^{p+1}} = \lim_{|u|\to +\infty}\frac{\tau(u)}{(p+1)|u|^{p} \hbox{sgn} (u) } 
= \frac{1}{p+1}
\end{equation*}
and therefore, for some $c$ and $A$, we obtain
\begin{equation*}
W(u \left | \bar u \right. ) = W(u) - W(\bar u) - W'(\bar u)(u-\bar u) \geq c |u|^{p+1} - A\, ,
\end{equation*}
for $u \in \R$, $\bar u\in K$. 
We select  $U_0$ so that for $|u| > U_0$ we get $ c |u|^{p+1} - A \geq C ( |u|^p +1)$. Then
\begin{equation*}
|\tau(u \left | \bar u \right. )| \leq C W(u \left | \bar u \right. ) \quad \forall \; |u| > U_0 \, , \; \bar u \in K \, .
\end{equation*}

On the complementary interval $u \in [-U_0 , U_0]$, we have
\begin{equation*}
\tau'(u)\geq c >0
\end{equation*}
and thus
\begin{equation*}
\tau(u | \bar u) \le  C_1 (u-\bar u)^2 \leq  C_2 W(u | \bar u)  \qquad \forall \; |u| \le U_0 \, , \; \bar u \in K \, .
\end{equation*}
\end{proof}

Using Proposition \ref{prop:relenpsys}, we obtain the main stability and convergence result in terms of  the quantity
\begin{equation*}
\Phi(t) = \int_{\R}  \mathcal{E}(u, v \left | \bar u, \bar v\right. )dx.
\end{equation*}

\begin{theorem}\label{th:finalpsys}
Assume $\tau$ satisfies $\tau' > 0$ and \eqref{eq:laggrowth}.
Let $\bar u$ be a smooth solution of \eqref{eq:equil}, defined on $Q_T = \R \times [0,T)$ with $T > 0$ fixed,
$\bar v = \eps \tau (\bar u)_x$, be 
 such that $\bar u$ is bounded and $\| \tau(\bar u)_{x t}\|_{L^2(Q_T)} \le K < \infty$.
Let $(u,v)$ be a weak, entropy solution  of \eqref{eq:main} such that 
$\Phi(0) < +\infty$
and
\begin{equation*}
  \mathcal{F}(u, v \left | \bar u, \bar v\right. )\to 0,\ \hbox{as}\ x\ \to \pm\infty.
\end{equation*}
Then the following stability estimate holds:
\begin{equation}
\Phi(t) \leq C (\Phi(0) + \e^4), \quad t \in [0, T) \, ,
\label{eq:stabpsys}
\end{equation}
where $C$ is a constant depending on $T$, the properties of $\tau (u)$, and the function
$\bar u$ and its derivatives.
Moreover, if $\Phi(0)\to 0$ as $\e \downarrow 0$, then
\begin{equation*}
\sup_{t\in[0,T]}\Phi(t) \to 0, \ \hbox{as}\ \e \downarrow 0.
\end{equation*}
\end{theorem}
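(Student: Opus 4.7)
The plan is to integrate the relative entropy inequality \eqref{eq:relenpsys} of Proposition \ref{prop:relenpsys} in space over $\R$ and then in time, and close the resulting integral inequality by a Gronwall argument. The structure mirrors exactly the proof of Theorem \ref{th:finalEuler1d}: the dissipative term $-\frac{1}{\e^2}(v-\bar v)^2$ provides damping, the quadratic contribution from the flux will be controlled by $\Phi(t)$ itself (using Lemma \ref{lem:lembound}), and the linear error term proportional to $\e \tau(\bar u)_{xt}$ will be handled by Young's inequality, paying an $\e^4$ price and absorbing a piece of $(v-\bar v)^2$ into the dissipation.

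First, I would justify the spatially integrated version of \eqref{eq:relenpsys}. Since \eqref{eq:main} and \eqref{eq:equilrev} are read in the weak/distributional sense, I would insert test functions of the form $\theta(\tau)\omega_R(x)$, where $\theta$ is the Lipschitz plateau function \eqref{testtheta} and $\omega_R$ is a spatial cutoff, then send $R\to\infty$ using the decay assumption $\mathcal{F}(u,v\,|\,\bar u,\bar v)\to 0$ as $x\to\pm\infty$ to annihilate the flux boundary contributions, and finally send $\kappa\downarrow 0$. This gives
\begin{equation*}
\Phi(t) + \frac{1}{\e^2}\int_0^t\!\!\int_\R (v-\bar v)^2\,dx\,d\tau \;\le\; \Phi(0) + \int_0^t\!\!\int_\R \tau(\bar u)_{xx}\,\tau(u\,|\,\bar u)\,dx\,d\tau - \e\int_0^t\!\!\int_\R \tau(\bar u)_{xt}(v-\bar v)\,dx\,d\tau.
\end{equation*}

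Next, I would bound the two right-hand-side integrals. Since $\bar u$ is bounded and $\tau\in C^2$ satisfies \eqref{eq:laggrowth}, Lemma \ref{lem:lembound} yields $\tau(u\,|\,\bar u)\le C\,W(u\,|\,\bar u)\le C\,\mathcal{E}(u,v\,|\,\bar u,\bar v)$, so that
\begin{equation*}
\int_\R \tau(\bar u)_{xx}\,\tau(u\,|\,\bar u)\,dx \;\le\; C\,\|\tau(\bar u)_{xx}(\cdot,\tau)\|_{L^\infty}\,\Phi(\tau).
\end{equation*}
For the error term, Young's inequality with weight $\e^2$ gives
\begin{equation*}
\e\bigl|\tau(\bar u)_{xt}(v-\bar v)\bigr| \;\le\; \frac{\e^4}{2}\bigl|\tau(\bar u)_{xt}\bigr|^2 + \frac{1}{2\e^2}(v-\bar v)^2,
\end{equation*}
so the $(v-\bar v)^2$ piece is absorbed into half of the dissipation, leaving an $O(\e^4)$ residual bounded by the assumed $L^2(Q_T)$ norm of $\tau(\bar u)_{xt}$.

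Combining these bounds produces
\begin{equation*}
\Phi(t) + \frac{1}{2\e^2}\int_0^t\!\!\int_\R (v-\bar v)^2\,dx\,d\tau \;\le\; \Phi(0) + C\int_0^t \Phi(\tau)\,d\tau + \frac{\e^4}{2}\,\|\tau(\bar u)_{xt}\|_{L^2(Q_T)}^2,
\end{equation*}
and Gronwall's inequality immediately yields the stability estimate \eqref{eq:stabpsys} with a constant $C$ depending on $T$, the $L^\infty_{t,x}$ norm of $\tau(\bar u)_{xx}$, the constant in Lemma \ref{lem:lembound}, and $\|\tau(\bar u)_{xt}\|_{L^2(Q_T)}$. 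The convergence $\sup_{[0,T]}\Phi(t)\to 0$ as $\e\downarrow 0$ under $\Phi(0)\to 0$ is then immediate. The main technical obstacle, as in the one-dimensional Euler case, is the proper truncation argument that converts the weak form of the equations into the integrated relative entropy identity under only the decay hypothesis on the relative flux; once this is in place the rest is a routine application of Lemma \ref{lem:lembound}, Young's inequality, and Gronwall.
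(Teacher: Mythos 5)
Your proposal is correct and follows essentially the same approach as the paper's own (sketched) proof: integrate \eqref{eq:relenpsys} over $\R\times[0,t]$, bound the quadratic term via Lemma \ref{lem:lembound} as $|\tau(\bar u)_{xx}\tau(u\,|\,\bar u)|\le C\,\mathcal{E}(u,v\,|\,\bar u,\bar v)$, absorb half the dissipation with Young's inequality on the $O(\e)$ error term, and close with Gronwall. The paper only sketches the truncation argument by pointing to Theorem \ref{th:finalEuler1d}; you have spelled that out, but the underlying argument is identical.
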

\begin{proof}
The proof proceeds along the lines of  Theorem \ref{th:finalEuler3d} and Theorem \ref{th:finalEuler1d}; here we shall just sketch it. 

We integrate \eqref{eq:relenpsys} over $\R \times [0,t]$, $t < T$.  The right hand side of \eqref{eq:relenpsys}
is estimated using Lemma \ref{lem:lembound}, 
\begin{equation*}
| \tau(\bar u)_{xx}\tau(u \left | \bar u \right. )| \leq  C_1 W(u|\bar u)   \leq C \mathcal{E}(u, v \left | \bar u, \bar v \right. ) \, , 
\end{equation*}
and Young's inequality
\begin{equation*}
|\e \tau(\bar u)_{xt}(v - \bar v)| \leq \frac{1}{2\e^2}(v-\bar v)^2 + \frac{\e^4}{2}  \big | \tau(\bar u)_{xt} \big |^2 \, .
\end{equation*}
Then, \eqref{eq:stabpsys} is a direct consequence of the Gronwall Lemma.
\end{proof}

\section{Viscoelasticity with memory}\label{sec:viscoelasticity}
We conclude with an example where the diffusive scaling limit is  a  hyperbolic -- parabolic system. 
Consider the following $3\times 3$,
    one dimensional, quasilinear system of viscoelasticity with memory effects:
\begin{equation}\label{eq:mainVE}
\begin{aligned}
u_{t}  - v_{x} &=0
\\
v_{t}  - \sigma(u)_{x} - \frac{1}{\e}  z_{x} &=0
\\
z_{t} - \frac{\mu}{\e}v_{x}  &=  - \frac{1}{\e^{2}} z\, ,
\end{aligned}
\end{equation}	
where $\mu > 0$ and the elastic stress function $\sigma$ satisfies the usual 
condition $\sigma'(u)>0$ which guarantees 
 hyperbolicity. 
The above system describes a one dimensional viscoelastic material for 
which the stress $S = \sigma(u) + \frac{1}{\e} z$ is the sum of an elastic part and a 
viscoelastic part of the memory type (see \eqref{eq:mainVE}$_{3}$).
The system is scaled appropriately so that it relaxes as $\eps \to 0$ to the 
equations of viscoelasticity of the rate type, 
\begin{equation}\label{eq:equilVE}
\begin{aligned}
u_{t}  - v_{x} &=0
\\
v_{t}  - \sigma(u)_{x} &=\mu v_{xx} \, .
\end{aligned}
\end{equation}	
In the latter system, the total stress
 $T = \sigma(u) + \mu v_{x}$ consists of an elastic part and a Newtonian viscous stress.
 We refer to \cite{DiFL04,DL09} for studies of a corresponding semilinear relaxation framework,
using energy bounds. Here, we focus at the quasilinear level, and pursue a relative entropy
analysis to explore the relation between the two systems.

The mechanical energy for 
\eqref{eq:mainVE} is 
\begin{equation*}
    \mathbb{E}(u,v,z) = \int_0^{u}\sigma(s)ds + \frac{1}{2}v^{2} + 
    \frac{1}{2\mu} z^{2} = \Sigma(u) + \frac{1}{2}v^{2} + 
    \frac{1}{2\mu} z^{2},
\end{equation*}
with  energy flux 
\begin{equation*}
    \mathbb{F}_{\e}(u,v,z) = - (\e \sigma(u)v +vz)\, .
\end{equation*}
Weak solutions of \eqref{eq:mainVE} are required to satisfy
the entropy inequality
\begin{equation}\label{eq:entropyVE}
     \mathbb{E}(u,v,z)_{t} + \frac{1}{\e} \mathbb{F}_{\e}(u,v,z)_{x}  \leq 
    -\frac{1}{\mu \e^{2}}z^{2} \, ,
\end{equation}
manifesting the dissipation of the mechanical energy. (Smooth solutions of \eqref{eq:mainVE}
satisfy \eqref{eq:entropyVE} as an equality.)

The equation capturing the dissipation of mechanical energy for smooth processes of 
viscoelasticity of the rate type \eqref{eq:equilVE} reads
\begin{equation}
     \mathbb{E}(u,v,0)_{t} +  \mathbb{F}_{1}(u, v, \sigma(u)_{x})_{x} = - \mu (v_{x})^{2},
    \label{eq:entropyeqVE}
\end{equation}
where
\begin{equation*}
     \mathbb{E}(u,v,0) =  \Sigma(u)+ \frac{1}{2}v^{2} 
\end{equation*}
is the equilibrium energy for $\mathbb{E}(u,v,z)$ and 
\begin{equation*}
    \mathbb{F}_{1}(u, v, \sigma(u)_{x}) = - ( \sigma(u)v +\mu vv_{x}).
\end{equation*}
Note that \eqref{eq:entropyeqVE} is the leading order (with respect to the relaxation parameter)
asymptotic development of the energy dissipation inequality \eqref{eq:entropyVE}.
This may be seen, as in the previous sections,  by expanding \eqref{eq:entropyVE}  in terms of the 
Hilbert expansion; we omit the details here.

\subsection{Relative entropy estimate and study of the relaxation limit}\label{subsec:relenexVE}
Following the general procedure, outlined in Section \ref{subsec:relenexEuler}, 
we  recast the equilibrium system \eqref{eq:equilVE} and the 
corresponding stress--strain response in the variables
$(\bar u, \bar v, \bar z)$ with $\bar z = \eps \mu \bar v_x$ as 
follows:
\begin{equation}
    \begin{cases}
	\displaystyle{\bar u_{t} - \bar v_{x} =0}&   \\
	& \\
       \displaystyle{ \bar v_{t} - 
       \sigma(\bar u)_{x} - 
	\frac{1}{\e}\bar z_{x} =0} & \\
	& \\
	\displaystyle{\bar z_{t} - \frac{\mu}{\e}\bar v_{x} =  -
		\frac{1}{\e^{2}} \bar z + \bar z_{t}}\, ,
    \end{cases}
    \label{eq:VEbar}
\end{equation}
where we shall treat the term $\bar z_{t}$ as an $O(\e)$ error:
\begin{equation*}
    \bar z_{t} = \e \mu \bar v_{xt} = \e\mu \big (\sigma(\bar u)_{x} 
    + \mu \bar v_{xx} \big)_{x}\, .
\end{equation*}
We define the relative entropy and relative entropy flux, respectively,
\begin{align*}
     &\mathbb{E}(u,v,z\left | \bar u, \bar v, \bar z \right. ) =  \mathbb{E}( u,v,z) -  \mathbb{E}( \bar u, \bar v, \bar z) 
\\
&\quad - \mathbb{E}_{u} (\bar u, \bar v, \bar z) (u - \bar u) - 
     \mathbb{E}_{v} (\bar u, \bar v, \bar z)(v - \bar v) - 
     \mathbb{E}_{z}(\bar u, \bar v, \bar z)(z-\bar z)\, ,
\\
     &\mathbb{F}_{\e}(u,v,z\left | \bar u, \bar v, \bar z \right. ) 
     = \mathbb{F}_{\e} (u,v,z) - \mathbb{F}_{\e}(\bar u, \bar v, \bar z) - 
    \mathbb{E}_{u} (\bar u, \bar v, \bar z) \big (-\e (v - \bar v)\big) 
\\
    &\quad -\mathbb{E}_{v} (\bar u, \bar v, \bar z)\big(-\e (\sigma(u) - 
	\sigma(\bar u)) - (z -\bar z)\big) 
	 - 
	\mathbb{E}_{z}(\bar u, \bar v, \bar z)\big(v-\bar v\big)\, ,
\end{align*}
and derive the relative entropy identity:

\begin{proposition}\label{prop:relenVE}
Let $(u,v,z)$  be a weak entropy solution of \eqref{eq:mainVE} and let
$(\bar u, \bar v, \bar z)$ be a smooth solution of \eqref{eq:VEbar}. Then
\begin{equation}
\begin{aligned}
  \del_t \mathbb{E}(u, v, z \left | \bar u, \bar v , \bar z\right. ) &+ \frac{1}{\e} \del_x 
  \mathbb{F}_\e(u, v, z \left | \bar u, \bar v , \bar z\right. )
  \\
  &\leq -\frac{1}{\mu\e^2} (z-\bar z)^2  + 
\bar v_x\sigma(u \left | \bar u \right. ) - \e \bar v_{xt}(z - \bar z)\, .
\end{aligned}
\label{eq:relenVE}
\end{equation}
\end{proposition}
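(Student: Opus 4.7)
The plan is to adapt the template of Proposition~\ref{prop:relenEuler} to the three-field state $U=(u,v,z)$, combining the entropy inequality \eqref{eq:entropyVE} for $(u,v,z)$, a smooth energy identity for $(\bar u,\bar v,\bar z)$, and the evolution of the linear cross-term $\mathbb{E}_U(\bar U)\cdot(U-\bar U)$. The three contributions will reassemble into the three terms on the right-hand side of \eqref{eq:relenVE}.

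First I would record the identity for the smooth equilibrium. Since \eqref{eq:VEbar} differs from \eqref{eq:mainVE} only by the residual $(0,0,\bar z_t)$ in the source, multiplying by $\mathbb{E}_U(\bar U)=(\sigma(\bar u),\bar v,\bar z/\mu)$ and integrating by parts yields
\begin{equation*}
\mathbb{E}(\bar u,\bar v,\bar z)_t+\tfrac{1}{\eps}\mathbb{F}_\eps(\bar u,\bar v,\bar z)_x
=-\tfrac{1}{\mu\eps^{2}}\bar z^{2}+\tfrac{1}{\mu}\bar z\,\bar z_t.
\end{equation*}
Next I would subtract \eqref{eq:mainVE} from \eqref{eq:VEbar} to obtain a conservation law for $U-\bar U$ with residual source $(0,0,-(z-\bar z)/\eps^{2}-\bar z_t)$, multiply it by $\mathbb{E}_U(\bar U)$, and reorganize using the smoothness of $\bar U$ together with the entropy consistency identity $(\nabla F)^{\top}\mathbb{E}_{UU}=\mathbb{E}_{UU}\nabla F$ for the fluxes $F=(-v,\,-\sigma(u)-z/\eps,\,-\mu v/\eps)$. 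As in the Eulerian proof, the convective block splits into a linear piece that is absorbed by the corresponding linear contributions of the two entropy identities, plus the expected quadratic remainder $\mathbb{E}_{UU}(\bar U)\bar U_x\cdot F(U\,|\,\bar U)$.

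A direct computation then shows that the only nonlinear flux component is the stress, so $F(U\,|\,\bar U)=(0,\,-\sigma(u\,|\,\bar u),\,0)$; since $\mathbb{E}_{UU}(\bar U)=\mathrm{diag}(\sigma'(\bar u),1,1/\mu)$ is diagonal, this quadratic remainder collapses to $-\bar v_x\,\sigma(u\,|\,\bar u)$, producing the middle term on the right of \eqref{eq:relenVE} after the sign flip dictated by the template. In contrast with the Euler case, no $1/\eps$ prefactor survives here because the stress enters the momentum equation without a $1/\eps$ factor; consequently the analog of the lemma controlling $Q$ (cf.\ Lemma~\ref{lem:controlQ}) will not be needed for this term.

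The main technical point is the source bookkeeping. The contributions $-z^{2}/(\mu\eps^{2})$ from the entropy inequality, $+\bar z^{2}/(\mu\eps^{2})$ from the smooth identity (moved to the right-hand side), and $+2\bar z(z-\bar z)/(\mu\eps^{2})$ arising from $\mathbb{E}_U(\bar U)\cdot[S(U)-S(\bar U)]$ and $\mathbb{E}_{UU}(\bar U)S(\bar U)\cdot(U-\bar U)$ must combine into the perfect square $-(z-\bar z)^{2}/(\mu\eps^{2})$, mirroring the structure of Remark~\ref{rem:relenEulerQE}(c). Simultaneously, the two spurious $\bar z\bar z_t/\mu$ contributions (one from the smooth identity, one from the cross-term) must cancel exactly, leaving only the residual $-\bar z_t(z-\bar z)/\mu$, which equals $-\eps\,\bar v_{xt}(z-\bar z)$ via $\bar z=\eps\mu\bar v_x$. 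No deep new idea beyond the Eulerian template is required, but the careful tracking of signs across these three distinct source contributions is the step most prone to error.
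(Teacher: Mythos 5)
Your proposal is correct and follows essentially the same route as the paper: subtract the smooth energy identity and the evolution of the linear cross-term $\mathbb{E}_U(\bar U)\cdot(U-\bar U)$ from the entropy inequality, identify the quadratic flux remainder as $-\bar v_x\,\sigma(u\,|\,\bar u)$ (which picks up a plus sign in the relative entropy), and assemble the $1/(\mu\e^2)$ source contributions into the perfect square $-(z-\bar z)^2/(\mu\e^2)$ while the $\bar z_t$ bookkeeping reduces to the residual $-\e\bar v_{xt}(z-\bar z)$. The paper carries this out by direct computation of the cross-term rather than invoking the Euler template abstractly, but the decomposition is identical.
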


\begin{proof}
We  use \eqref{eq:entropyVE} and rewrite 
\eqref{eq:entropyeqVE} as follows:
\begin{equation*}
    \mathbb{E}(\bar u, \bar v, \bar z)_{t} +\frac{1}{\e}\mathbb{F}_\e(\bar u, \bar 
    v, \bar z)_{x}
    = 
    -\frac{1}{\mu\e^{2}} \bar z^{2} + 
     \e \bar z\bar v_{xt}\, .
\end{equation*}
Moreover, a direct computation shows
\begin{align*}
    & \pt \Big (\sigma(\bar u)(u - \bar u)  + 
    \bar v (v - \bar v) + \frac{1}{\mu}\bar z (z-\bar z)  
    \Big )  \\
    & \ \ + \frac{1}{\e}  \px \Big (-\e\sigma(\bar u)(v-\bar v) - 
    \e\bar v(\sigma(u)-\sigma(\bar u)) - \bar v(z-\bar z) - \bar 
    z(v-\bar v) \Big) \\
	&\ = - \frac{1}{\mu\e^{2}}\bar z (z-\bar z) - 
	\e \bar z  \bar v_{xt} + \sigma'(\bar u)\bar 
	u_{t}(u-\bar u) + \bar v_{t}(v-\bar v) + \frac{1}{\mu}\bar 
	z_{t} (z -\bar z) \\
	&\ \ - \sigma(\bar u)_{x}(v-\bar v) - \bar 
	v_{x}(\sigma(u) - \sigma (\bar u)) - \frac{1}{\e}\bar 
	v_{x}(z-\bar z) - \frac{1}{\e}\bar z_{x}(v-\bar v) \\
	&\ = - \frac{2}{\mu\e^{2}}\bar z (z-\bar z) 
	   + \e \bar v_{xt} (z-2\bar z)
	  - \bar v_{x} \big (\sigma(u) - \sigma (\bar u) - 
	\sigma'(\bar u)(u-\bar u)\big)\, .
\end{align*}
Finally, putting all relations together we obtain \eqref{eq:relenVE}.
\end{proof}

Proposition \ref{prop:relenVE} suggests to measure the distance between systems 
\eqref{eq:mainVE} and \eqref{eq:equilVE} via the quantity
\begin{equation*}
\Psi(t) = \int_{\R}  \mathbb{E}(u,v,z\left | \bar u, \bar v, \bar z \right. )dx\, .
\end{equation*}
For the stress function $\sigma \in C^2(\R)$ we assume
the hypotheses
\begin{align}
\sigma'(u)  &> 0\, ,
\tag{a$_1$}
\label{hypa1}
\\
\sigma(u)  &= \pm |u|^p + o(|u|^p) \, , \quad \mbox{as  $u\to \pm \infty$, for some $p \ge 1$} \, ,
\tag {a$_2$}
\label{hypa2}
\end{align}
and show the following stability and convergence results:

\begin{theorem}\label{th:finalVE}
Let  $T>0$ be fixed and let $(\bar u, \bar v, \bar z)$ be any smooth solution of \eqref{eq:VEbar} with in particular
 $\bar u$ bounded in $L^\infty$.
Let $(u,v,z)$ be a weak, entropy solution  of \eqref{eq:mainVE} such that
$\Psi(0)< +\infty$
and
\begin{equation*}
  \mathbb{F}_\e(u,v,z\left | \bar u, \bar v, \bar z \right. )\to 0,\ \hbox{as}\ x\ \to \pm\infty\, .
\end{equation*}
If $\sigma$ verifies \eqref{hypa1}, \eqref{hypa2}, then 
\begin{equation*}
\Psi(t) \leq C (\Psi(0) + \e^4), \quad t\in [0,T)\, , 
\end{equation*}
for a given positive constant $C$ depending only on $T$, the properties of $\sigma$
and the limit functions $\bar u$, $\bar v$ and their derivatives. 
Moreover, if $\Psi(0)\to 0$ as $\e \downarrow 0$, then
\begin{equation*}
\sup_{t\in[0,T]}\Psi(t) \to 0, \ \hbox{as}\ \e \downarrow 0\, .
\end{equation*}
\end{theorem}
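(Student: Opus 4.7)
The plan is to follow the pattern already established in the proofs of Theorems \ref{th:finalEuler3d}, \ref{th:finalEuler1d}, and especially \ref{th:finalpsys}, which has an almost identical structure. The starting point is the relative entropy inequality \eqref{eq:relenVE} furnished by Proposition \ref{prop:relenVE}; the goal is to integrate it over $\R\times[0,t]$ for $t\in[0,T)$, control the right hand side by $\Psi$ and an $O(\e^4)$ error, and close with Gronwall's lemma.

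First, I would pass from the distributional inequality \eqref{eq:relenVE} to an integrated form for the weak entropy solution $(u,v,z)$. This is done as in Theorem \ref{th:finalEuler1d}: introduce a Lipschitz time cutoff $\theta(\tau)$ as in \eqref{testtheta} and a spatial cutoff $\omega(x)$ supported on $[-R-\delta,R+\delta]$; use the weak form of \eqref{eq:mainVE} and the entropy inequality \eqref{eq:entropyVE} tested against $\theta\omega$ times the appropriate components of $\nabla\mathbb{E}(\bar u,\bar v,\bar z)$; and then send first $R\to\infty$, exploiting the hypothesis $\mathbb{F}_\e(u,v,z\mid \bar u,\bar v,\bar z)\to 0$ as $|x|\to\infty$ to kill the boundary flux, and then $\kappa\downarrow 0$. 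This yields
\begin{equation*}
\Psi(t)+\frac{1}{\mu\e^{2}}\int_{0}^{t}\!\!\int_{\R}(z-\bar z)^{2}\,dxd\tau
\leq \Psi(0)+\int_{0}^{t}\!\!\int_{\R}\bigl(|\bar v_{x}\sigma(u\mid\bar u)|+\e|\bar v_{xt}||z-\bar z|\bigr)dxd\tau.
\end{equation*}

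Next I would estimate the two source terms. For the quadratic flux term, I would prove an analog of Lemma \ref{lem:lembound}: under \eqref{hypa1} and \eqref{hypa2}, for $\bar u$ bounded in a compact set $K$ there is a constant $C=C(K)$ with $|\sigma(u\mid \bar u)|\leq C\,\Sigma(u\mid\bar u)\leq C\,\mathbb{E}(u,v,z\mid\bar u,\bar v,\bar z)$. The proof is verbatim to Lemma \ref{lem:lembound}, splitting into $|u|>U_{0}$ (where the growth \eqref{hypa2} gives $\Sigma(u\mid\bar u)\gtrsim |u|^{p+1}$ dominating $|\sigma(u\mid\bar u)|\lesssim |u|^{p}+1$) and $|u|\leq U_{0}$ (where $\sigma'>0$ gives a quadratic lower bound). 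Consequently
\begin{equation*}
\int_{0}^{t}\!\!\int_{\R}|\bar v_{x}\sigma(u\mid\bar u)|\,dxd\tau\leq C_{1}\|\bar v_{x}\|_{L^{\infty}}\int_{0}^{t}\Psi(\tau)d\tau.
\end{equation*}
For the linear error I use Young's inequality
\begin{equation*}
\e|\bar v_{xt}||z-\bar z|\leq \frac{1}{2\mu\e^{2}}(z-\bar z)^{2}+\frac{\mu\e^{4}}{2}|\bar v_{xt}|^{2},
\end{equation*}
so that the first piece is absorbed by the dissipation on the left and the second contributes an $O(\e^{4})$ term provided $\bar v_{xt}\in L^{2}(Q_{T})$ (implicit in the assumption that $C$ depends on $\bar u,\bar v$ and their derivatives, since $\bar v_{xt}=\sigma(\bar u)_{xt}+\mu \bar v_{xxt}$ by \eqref{eq:equilVE}).

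Combining the estimates yields
\begin{equation*}
\Psi(t)+\frac{1}{2\mu\e^{2}}\int_{0}^{t}\!\!\int_{\R}(z-\bar z)^{2}dxd\tau \leq \Psi(0)+C_{1}\int_{0}^{t}\Psi(\tau)d\tau+C_{2}\e^{4}T,
\end{equation*}
and Gronwall's inequality then gives the stability bound $\Psi(t)\leq C(\Psi(0)+\e^{4})$, from which the convergence claim follows trivially. I expect the main obstacle to be the analog of Lemma \ref{lem:lembound} for $\sigma$: verifying that \eqref{hypa1}--\eqref{hypa2} deliver the inequality $|\sigma(u\mid\bar u)|\leq C\,\Sigma(u\mid\bar u)$ uniformly over the range of $\bar u$, since without this comparison the quadratic term $\bar v_{x}\sigma(u\mid\bar u)$ cannot be absorbed into $\Psi(\tau)$ and Gronwall is unavailable. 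Everything else is a direct transcription of the machinery already worked out in Sections \ref{sec:euler} and \ref{sec:psystem}.
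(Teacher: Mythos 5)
Your proposal is correct and follows exactly the route the paper itself indicates: the paper omits the details, saying only that the proof ``employs the relative entropy inequality \eqref{eq:relenVE} and proceeds following Theorems \ref{th:finalpsys} and \ref{th:finalEuler3d}'', and your filled-in argument (integrated relative entropy inequality via cutoffs, an analog of Lemma \ref{lem:lembound} for $\sigma$ giving $|\sigma(u\mid\bar u)|\le C\,\Sigma(u\mid\bar u)\le C\,\mathbb{E}(u,v,z\mid\bar u,\bar v,\bar z)$, Young's inequality to absorb half the dissipation, Gronwall) is precisely that transcription. Your identification of the $\sigma$-analog of Lemma \ref{lem:lembound} as the one step that needs checking, and your note that the hidden integrability hypothesis $\bar v_{xt}\in L^{2}(Q_T)$ is absorbed into the dependence of $C$ on $\bar v$ and its derivatives, are both accurate.
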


The proof employs the relative entropy inequality \eqref{eq:relenVE} and  proceeds following 
Theorems \ref{th:finalpsys} and {\ref{th:finalEuler3d}; the details are omitted here.

\bigskip
\noindent
{\bf Acknowledgements.} 
Research partially supported by the EU FP7
REGPOT project 
{\it ``Archimedes Center for
Modeling, Analysis and Computation''}.
AET partially supported by the ARISTEIA program of the Greek Secretariat
for Research.

\end{document}